\newtheorem{theorem}{Theorem}[section]
\newtheorem{problem}[theorem]{Problem}
\newtheorem{lemma}[theorem]{Lemma}
\newtheorem{claim}{Claim}
\definecolor{darkred}{rgb}{1, 0.1, 0.3}
\definecolor{darkblue}{rgb}{0.1, 0.1, 1}
\definecolor{darkgreen}{rgb}{0,0.6,0.5}
\newcommand {\mm}[1] {\ifmmode{#1}\else{\mbox{\(#1\)}}\fi}
\def\qed{\hfill $\Box$\vspace{0.3cm}}
\def\pf{\noindent{\bf Proof. }}
\begin{document}
\title{The saturation number for unions of four cliques}
    \author { {\small Ruo-Xuan Li,~~Rong-Xia Hao
    ,~~Zhen He
    ,~~Wen-Han Zhu}
        \\\\
        \small \it  {School of Mathematics and Statistics, Beijing Jiaotong University, Beijing 100044, P.R. China.}\\
    }
    \date{}
    \maketitle
    \renewcommand{\thefootnote}{}
    \footnotetext {{E-mail addresses}: {\tt 22121589@bjtu.edu.cn} (R.-X. Li), {\tt rxhao@bjtu.edu.cn} (R.-X. Hao), {\tt zhenhe@bjtu.edu.cn} (Z. He), {zhuwenhanzwh@163.com} (W.-H. Zhu).}
\begin{abstract}
A graph $G$ is $H$-saturated if $H$ is not a subgraph of $G$ but $H$ is a subgraph of $G + e$ for any
edge $e$ in $\overline{G}$. The saturation number $sat(n,H)$ for a graph $H$ is the minimal number of edges in
any $H$-saturated graph of order $n$. The $sat(n, K_{p_1} \cup K_{p_2} \cup K_{p_3})$ with $p_3 \ge p_1 + p_2$ was given in [Discrete Math. 347 (2024) 113868]. In this paper, $sat(n,K_{p_1} \cup K_{p_2} \cup K_{p_3} \cup K_{p_4})$ with $p_{i+1} - p_i \ge p_1$ for $2 \le i\le 3$ and $4\le p_1\le p_2$ is determined.
        \vskip 0.1in
        \textbf {Keyword.} Saturation number; disjoint union of cliques; extremal graph.
\end{abstract}
    \baselineskip=14pt
\section{Introduction}\label{sec 1}
    Let $G = (V(G), E(G))$ be a simple graph. The vertex set and the edge set of a graph $G$ are denoted by $V(G)$ and $E(G)$. The order and size of $G$ are denoted by $v(G)$ and $e(G)$. Let $E(A,B)$ be the set of the edges between $A$ and $B$ with $A,B \subseteq V(G)$ and $e(A,B)=|E(A,B)|$. For any vertex $v\in V(G)$, the set of neighbors of $v$ in $G$ is denoted by $N_G(v)$ and $N_G [v] = N_G(v) \cup \{v\}$. Furthermore we write $d_G(v) = |N_G(v)|$. A graph is \textit{$k$-regular} if for any vertex $v \in G$, $d_G(v) = k$. Let the minimum degree of $G$ be $\delta(G)$. For $S \subseteq V(G)$, we write $\overline{S} = V (G) \backslash S$ and $G[S]$ is the subgraph of $G$ induced by the vertices in $S$. For any two vertex disjoint graphs $G$ and $H$, $G \cup H$ is the graph with $V (G \cup H) = V (G) \cup V (H)$ and $E(G \cup H) = E(G) \cup E(H)$. The join of the graph $G$ and $H$, denoted by $G \lor H$, is the graph obtained from $G \cup H$ by adding edges between $V (G)$ and $V (H)$. ``w.l.o.g." means ``without loss of generality". For $A,B \subseteq V(G)$, $A\sim B$ means that $A$ and $B$ are completely joint. For more notations and terminologies that will be used in the sequel, we refer to \cite{ref14}, unless otherwise stated.

    Denoted the complement of $G$ by $\overline{G}$. A graph $G$ is $\textit{$H$-saturated}$ if the graph $H$ is not a subgraph of $G$, but for any edge $uv$ in $\overline{G}$, $H$ is a subgraph of $G+uv$. The saturation number for a graph $H$, denoted by $sat(n, H)$, is the minimal number of edges in any $H$-saturated graph of order $n$. The graph $G$ is called an $\textit{extremal graph}$ for $H$, if $G$ is an $H$-saturated graph of order $n$ with $e(G) = sat(n, H)$. Saturation numbers were first studied by P. Erd\H{o}s, A. Hajnal and J. W. Moon in \cite{ref1}. Denoted a complete graph and an independent set of order $n$ by $K_n$ and $I_n$. In \cite{ref1}, it was proved that $sat(n, K_r) = (r-2)(n-r+2) + \binom{r-2}{2}$ and the unique extremal graph for $K_r$ is $K_{r-2} \lor I_{n-r+2}$.

    Let $H(n; p_1, p_2,\dots , p_t) \cong K_{p_1-2} \lor (K_{p_2+1} \cup \dots \cup K_{p_t+1} \cup I_{n-t+3-\sum_{i=1}^t p_i})$ with $2\le  p_1 \le \dots \le  p_t$. L. K¨¢szonyi and Z. Tuza in \cite{ref2} determined $sat(n,tK_2)$ and the extremal graph for $tK_2$. In \cite{ref12}, it has been proved that $sat(n, tK_p) = (p-2)(n-p+2) + \binom{p-2}{2}+(t-1)\binom{p+1}{2}$ and $H(n; p, p, p)$ is the unique $3K_p$-saturated graph. $sat (n, K_{p_1} \cup K_{p_2} \cup K_{p_3})$ ($p_3 \ge p_1 + p_2$) are completely determined in \cite{ref13} and it was proved that $H(n; p_1, p_2, p_3)$ is the extremal graph for $K_{p_1} \cup K_{p_2} \cup K_{p_3}$. In \cite{ref13}, it also proved that $sat(n, K_p\cup (t-1)K_q) = (p-2)(n-p+2) + \binom{p-2}{2}+(t-1)\binom{q+1}{2}$ and $H(n; p, q,\dots , q)$ is a $K_p \cup (t - 1)K_q$-saturated graph. It was shown that $H(n; p, q)$ is the extremal graph for $K_p \cup K_q$ (see Theorem 2.3 in \cite{ref12}). The result that $H(n; p, q, q)$ is the unique extremal graph for $K_p \cup 2K_q$ were given (see Theorem 3.3 in \cite{ref13}). We refer the reader to \cite{ref3,ref4,ref5,ref6,ref7,ref8,ref9,ref10,ref11} for more results about the saturation number.

In this paper, a $K_{p_1} \cup \dots \cup K_{p_t}$-saturated graph is characterized and we have the Theorem~\ref{th2}.  In Section~\ref{sec 3} we will prove the Theorem~\ref{th2}.
    \begin{theorem}\label{th2}
The graph $H(n;p_1, p_2, \dots, p_t)$ is $K_{p_1} \cup \dots \cup K_{p_t}$-saturated if and only if $p_{i+1}-p_i \ge p_1$ or $p_{i+1}=p_i$ for $2\le i\le t-1$.
\end{theorem}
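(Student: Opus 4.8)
Throughout write $H=H(n;p_1,\dots,p_t)=A\lor(B_2\cup\dots\cup B_t\cup I)$ with $A=K_{p_1-2}$, $B_i=K_{p_i+1}$ for $2\le i\le t$, and $I$ the independent set. The plan is to split the definition of ``saturated'' into its two halves and treat them separately: (i) $H$ contains no copy of $K_{p_1}\cup\dots\cup K_{p_t}$ if and only if the stated gap condition holds, and (ii) $H+e$ contains a copy of $K_{p_1}\cup\dots\cup K_{p_t}$ for \emph{every} $e\in\overline H$, with no hypothesis on the $p_i$. The starting point for (i) is that a maximal clique of $H$ is either some slot $A\cup B_i$, a complete graph on $p_1+p_i-1$ vertices, or some $A\cup\{w\}$ with $w\in I$, of order $p_1-1<p_1$. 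Since every clique we must pack has order at least $p_1$, in any copy of $K_{p_1}\cup\dots\cup K_{p_t}$ inside $H$ each of the $t$ cliques lies in a single slot $A\cup B_i$ ($2\le i\le t$), and distinct cliques in the same slot meet $A$ in disjoint sets. Hence such a copy exists precisely when some assignment $\beta$ of the $t$ cliques (of orders $p_1,\dots,p_t$) to the $t-1$ slots satisfies $\sum_{i=2}^{t}\max\{0,\,S_i^{\beta}-(p_i+1)\}\le p_1-2$, where $S_i^{\beta}$ is the sum of the orders of the cliques sent to slot $i$; both directions of this equivalence are immediate because each $A\cup B_i$ is complete and $|A|=p_1-2$.

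For the ``only if'' direction I prove the contrapositive: if the gap condition fails, say $p_{i_0}<p_{i_0+1}<p_{i_0}+p_1$ with $2\le i_0\le t-1$, then $H$ already contains $K_{p_1}\cup\dots\cup K_{p_t}$. Use the assignment that sends the clique of order $p_{i_0+1}$ to slot $i_0$, sends the two cliques of orders $p_1$ and $p_{i_0}$ to slot $i_0+1$, and sends the clique of order $p_j$ to slot $j$ for every other $j$. With $g:=p_{i_0+1}-p_{i_0}$, so that $1\le g\le p_1-1$, the left-hand side of the inequality above equals $\max\{0,g-1\}+\max\{0,p_1-1-g\}=p_1-2$, so the copy exists and $H$ is not saturated.

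For the ``if'' direction the substantive part is to show that when the gap condition holds, every assignment $\beta$ satisfies $\sum_{i=2}^{t}\max\{0,S_i^{\beta}-(p_i+1)\}\ge p_1-1$, so no copy exists. The identity $\sum_{i=2}^{t}\big(S_i^{\beta}-(p_i+1)\big)=p_1-(t-1)$ rewrites that quantity as $W+p_1-(t-1)$, where $W:=\sum_{i:\,S_i^{\beta}\le p_i}\big((p_i+1)-S_i^{\beta}\big)$ is the total underfill, so it suffices to show $W\ge t-2$. The gap condition enters in two places. First, if some slot holds exactly one clique of order larger than $p_i$, then by the gap condition that clique has order at least $p_i+p_1$, which already forces the sum to be at least $p_1-1$; so we may assume every single-clique slot underfills, by at least $1$. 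Second, every empty slot underfills by $p_i+1\ge p_1+1$, and since there are $t-1$ slots and $t$ cliques the number of slots holding two or more cliques is at most one more than the number of empty slots; a short counting argument then lets the large contributions of the empty slots pay for the slots contributing nothing to $W$, giving $W\ge t-2$. Keeping track of these slot counts is the step I expect to demand the most care.

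It remains to establish (ii). Let $e=uv\in\overline H$; it is of one of three types, with $u,v$ in distinct $B_i,B_j$, or $u\in B_i$ and $v\in I$, or $u,v\in I$. In every case the common neighbourhood of $u$ and $v$ in $H+e$ is exactly $A$, so $A\cup\{u,v\}$ is a $K_{p_1}$ in $H+e$. Deleting from each $B_i$ the at most one vertex of $\{u,v\}$ it contains leaves a $K_{p_i}$, while every $B_k$ disjoint from $\{u,v\}$ already contains a $K_{p_k}$; these $t$ cliques are pairwise vertex disjoint and have orders exactly $p_1,\dots,p_t$, so $H+e\supseteq K_{p_1}\cup\dots\cup K_{p_t}$. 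Combining (i) and (ii), $H$ is $K_{p_1}\cup\dots\cup K_{p_t}$-saturated if and only if it contains no copy of $K_{p_1}\cup\dots\cup K_{p_t}$, which by (i) happens exactly when the gap condition holds.
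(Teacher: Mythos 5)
Your proposal is correct, and its core differs from the paper's argument in an interesting way. The necessity step (when some gap satisfies $1\le p_{i_0+1}-p_{i_0}\le p_1-1$, pack $K_{p_{i_0+1}}$ into the slot of $H_{i_0}$ and $K_{p_1}\cup K_{p_{i_0}}$ into the slot of $H_{i_0+1}$) and the ``every added edge creates a copy'' step are essentially the paper's. The substantive difference is in proving freeness under the gap condition: the paper argues by induction on $t$, invoking the known fact that $H(n;p,q,\dots,q)$ is $K_p\cup(t-1)K_q$-saturated as the base case and then showing the largest cliques of a putative copy can be relocated into the largest blocks so as to reduce to fewer cliques; you instead give a direct, self-contained counting argument. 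Your reformulation — each clique of a copy lies in a unique slot $A\cup B_i$, so a copy exists iff some assignment has total overflow $\sum_i\max\{0,S_i-(p_i+1)\}\le p_1-2$ — is valid, and the identity $\sum_i\bigl(S_i-(p_i+1)\bigr)=p_1-(t-1)$ correctly reduces the problem to showing the total underfill $W\ge t-2$. Your two observations (a single-clique slot that overflows does so by at least $p_1-1$ thanks to the gap condition and monotonicity of the $p_i$; otherwise $W\ge e(p_1+1)+s$ while $t-2\le 2e+s$ because the number $m$ of multi-clique slots satisfies $m\le e+1$) do close the argument, since $e(p_1+1)+s-(2e+s)=e(p_1-1)\ge 0$; I would just urge you to write out this last two-line computation, which you only sketch, since it is the crux. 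What your route buys is independence from the cited result of Chen--Yuan (your $t=2$ case is trivial in the overflow formulation) and a cleaner global argument that avoids the somewhat delicate relocation of the largest cliques in the paper's inductive step; the paper's route, in exchange, leans on existing machinery and is shorter to state given that machinery.
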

Furthermore, the exact value of $sat(n, K_{p_1} \cup \dots \cup K_{p_4})$ and a extremal graph are determined under some conditions. We will prove the Theorem~\ref{th1} in Section~\ref{sec 4}.
    \begin{theorem}\label{th1}
       Suppose $p_{i+1} - p_i \ge p_1$ for $2 \le i\le 3$, $4\le p_1\le p_2$ and $n > 3(p_1 - 2) +  \sum_{i=2}^{4}p_i(p_i + 1)$. Then $sat(n, K_{p_1} \cup K_{p_2} \cup K_{p_3} \cup K_{p_4})=e(H(n;p_1, p_2, p_3, p_4))=(p_1-2)(n-p_1+2)+\sum_{i=2}^{4}\binom{p_i+1}{2}$ and $H(n;p_1,p_2,p_3,p_4)$ is an extremal graph.
    \end{theorem}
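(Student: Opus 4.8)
The plan is to prove the two inequalities $sat(n,F)\le e(H(n;p_1,p_2,p_3,p_4))$ and $sat(n,F)\ge e(H(n;p_1,p_2,p_3,p_4))$, where $F:=K_{p_1}\cup K_{p_2}\cup K_{p_3}\cup K_{p_4}$; together they pin down $sat(n,F)$, and since $H(n;p_1,p_2,p_3,p_4)$ attains the latter bound it is extremal. For the upper bound, observe that when $t=4$ the hypotheses of Theorem~\ref{th2} reduce exactly to ``$p_{i+1}-p_i\ge p_1$ for $i=2,3$'' (nothing is required of $p_2-p_1$), so under our assumptions $H(n;p_1,p_2,p_3,p_4)$ is $F$-saturated, whence $sat(n,F)\le e(H(n;p_1,p_2,p_3,p_4))$; a direct count of the edges of $H(n;p_1,p_2,p_3,p_4)$ --- those inside the core $K_{p_1-2}$, those joining the core to the remaining $n-p_1+2$ vertices, and those inside each clique $K_{p_i+1}$ for $i=2,3,4$ --- yields the displayed value.

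For the lower bound I would take an arbitrary $F$-saturated graph $G$ on $n$ vertices and assume, towards a contradiction, that $e(G)<e(H(n;p_1,p_2,p_3,p_4))$. Since $e(H(n;p_1,p_2,p_3,p_4))$ is linear in $n$ while $n>3(p_1-2)+\sum_{i=2}^{4}p_i(p_i+1)$, the complement $\overline G$ then contains a matching of any prescribed constant size, i.e.\ $G$ has many pairwise ``far apart'' non-edges. A standard first step gives $\delta(G)\ge p_1-2$: if $d_G(v)\le p_1-3$, choose a non-neighbour $w$ of $v$; the copy of $F$ in $G+vw$ must use the edge $vw$, so $v$ lies in a clique-component of $F$ of order $p_j\ge p_1$, forcing $d_{G+vw}(v)\ge p_j-1\ge p_1-1$, a contradiction.

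The core of the argument is a structural lemma: $G$ contains three pairwise vertex-disjoint cliques $C_2,C_3,C_4$ with $|C_i|=p_i+1$. For a non-edge $e$, fix a copy $F_e$ of $F$ in $G+e$ that uses $e$; the edge $e$ lies in exactly one clique-component of $F_e$, and $F_e-e\subseteq G$. If for some non-edge the relevant component is the $K_{p_1}$-component, then the other three components are already disjoint cliques of $G$ of sizes $p_2,p_3,p_4$, and each can be enlarged to size $p_i+1$ inside $G$ by a local argument that uses a non-edge incident with that component together with the gap conditions $p_{i+1}-p_i\ge p_1$ (which keep the remaining cliques available while one edge is added). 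If instead every non-edge always lands in a clique-component of order $\ge p_2$ in every copy of $F$, one superposes two copies $F_{e_1},F_{e_2}$ arising from two far-apart non-edges: both $K_{p_1}$-components then lie in $G$, and an exchange argument --- using $p_{i+1}-p_i\ge p_1$ to forbid a clique of one copy being partially absorbed into a strictly larger clique of the other --- again produces disjoint cliques of sizes $p_2,p_3,p_4$, hence of sizes $p_i+1$, in $G$. I expect this exchange argument --- ruling out the residual configurations in which two overlapping copies of $F$ conspire to avoid an honest $K_{p_2}\cup K_{p_3}\cup K_{p_4}$ --- together with the enlargement step, to be the main obstacle.

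With the cliques in hand, set $R:=C_2\cup C_3\cup C_4$, so $|R|=\sum_{i=2}^{4}(p_i+1)$, and look at $G-R$ on $n-|R|$ vertices. Then $G-R$ is $K_{p_1}$-free (a $K_{p_1}$ in $G-R$ would, with $p_i$-element subsets of the $C_i$, give $F\subseteq G$), and one shows that for every non-edge $uv$ of $G-R$ the copy of $F$ in $G+uv$ can be chosen so that its $K_{p_1}$-component --- which must contain $uv$, or can be enlarged to do so --- lies entirely in $V(G-R)$; this uses the size-$(p_i+1)$ ``buffer'' of the $C_i$ and the bound on $n$, and it makes $G-R$ a $K_{p_1}$-saturated graph. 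Combining the Erd\H{o}s--Hajnal--Moon bound $sat(m,K_{p_1})=(p_1-2)(m-p_1+2)+\binom{p_1-2}{2}$ of \cite{ref1} with $m=n-|R|$, the estimate $e(G[R])\ge\sum_{i=2}^{4}\binom{p_i+1}{2}$, and $e(R,V(G)\setminus R)\ge(p_1-2)|R|$ --- the last coming from $\delta(G)\ge p_1-2$ applied to the vertices of $R$, each of which already has $p_i$ neighbours inside its own $C_i$, once one checks that such a vertex cannot trade its outside-neighbours for neighbours in another $C_j$ without enlarging some clique (contradicting the maximality used in the structural lemma) --- one obtains $e(G)\ge(p_1-2)(n-p_1+2)+\binom{p_1-2}{2}+\sum_{i=2}^{4}\binom{p_i+1}{2}=e(H(n;p_1,p_2,p_3,p_4))$, the desired contradiction; tracking equality through these estimates also confirms that $H(n;p_1,p_2,p_3,p_4)$ is extremal. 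The delicate points are, as flagged, the structural lemma and the placement of the newly created $K_{p_1}$ outside $R$ together with the tight bound $e(R,V(G)\setminus R)\ge(p_1-2)|R|$, which is precisely what encodes the ``core joined to everything'' shape of $H(n;p_1,p_2,p_3,p_4)$ and must be forced from saturation rather than assumed; everything else is routine bookkeeping.
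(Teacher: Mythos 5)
Your upper bound (Theorem~\ref{th2} plus the edge count of $H(n;p_1,p_2,p_3,p_4)$) is fine and matches the paper, but your lower-bound strategy has gaps at exactly the points you flag as ``the main obstacle,'' and they are not minor. First, the structural lemma --- that an $F$-saturated graph $G$ with $e(G)<e(H(n;p_1,p_2,p_3,p_4))$ contains three pairwise disjoint cliques of orders $p_2+1,p_3+1,p_4+1$ --- is never proved; the sketched ``enlargement'' and ``exchange'' arguments are not arguments, and nothing in saturation forces a copy of $K_{p_i}$ in $G$ to extend to a $K_{p_i+1}$ in $G$. The paper never establishes such cliques at all: what it actually proves (and all it needs) is the edge inequality $e(G[\overline{S}])\ge\sum_{i=2}^{4}\binom{p_i+1}{2}$, obtained by superposing two copies $H_{vw}$ and $H_{vu}$ and running a long case analysis on the overlap sizes $\ell_{ij}$ (Lemmas~\ref{lem*}, \ref{lem3}, \ref{lem6} and Claims~\ref{lem4}, \ref{lem5}); proving your stronger clique statement would be at least as hard as that analysis, so it cannot be treated as a stepping stone.

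Second, your estimate $e(R,V(G)\setminus R)\ge (p_1-2)|R|$ does not follow from $\delta(G)\ge p_1-2$: a vertex of $C_i$ already has $p_i\ge p_1-2$ neighbours inside $C_i$, so minimum degree gives it no neighbours outside $R$ whatsoever, and the ``cannot trade neighbours without enlarging a clique'' remark is false (a vertex can have some, but not all, neighbours in another $C_j$ without creating any larger clique). The paper gets the corresponding $(p_1-2)(n-p_1+2)$ term from the much stronger domination property of Lemma~\ref{lem1}(2) (every vertex outside $S=N_G(v)$ is adjacent to all of $S$, for a minimum-degree vertex $v$, valid when $e(G)\le e(H)$), a nontrivial result quoted from the earlier paper that your proposal neither invokes nor re-derives. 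Third, the claim that $G-R$ is $K_{p_1}$-saturated is also unsupported: for a non-edge $uv$ of $G-R$ the copy of $F$ in $G+uv$ may place $uv$ inside a component of order $\ge p_2$, and any $K_{p_1}$-component it does contain may use vertices of $R$, so $(G-R)+uv$ need not contain a new $K_{p_1}$. Until these three points are actually proved, the reduction to the Erd\H{o}s--Hajnal--Moon bound does not go through, whereas the paper's route (Lemma~\ref{lem1} structure around a minimum-degree vertex plus the counting contradiction with Equation~(\ref{eq1})) avoids all of them.
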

    \section{Preliminary}\label{sec 2}
  First we give several based common properties to all $K_{p_1} \cup \dots \cup K_{p_t}$-saturated graphs.

\begin{lemma}\label{lem1}(\cite{ref13})
Suppose $t \ge 2$, $n > 3(p_1 - 2) +  \sum_{i=2}^{t}p_i(p_i + 1)$. Let $G$ be a $K_{p_1} \cup \dots \cup K_{p_t}$-saturated graph and $e(G) \le e(H(n; p_1, \dots , p_t))$. Let $v$ be a vertex of minimum degree in $V (G)$. Then, we have
\begin{enumerate}
\item[$(1)$] $d_{G}(v) = p_1 - 2$.
\item[$(2)$] $N_{G}(v) \subseteq N_{G}(w)$ for any $w \in \overline{N_{G}(v)}$.
\item[$(3)$] $e(G[\overline{N_{G}(v)}]) \le \sum_{i=2}^{t} \binom{p_i+1}{2}$.
\end{enumerate}
\end{lemma}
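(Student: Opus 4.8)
The plan is to prove the three items in order, deriving (2) and (3) as fairly direct consequences of (1). Throughout write $F=K_{p_1}\cup\cdots\cup K_{p_t}$, recall $p_1\le\cdots\le p_t$, and set $N=N_G(v)$, $d=d_G(v)=\delta(G)$. The mechanism I will invoke repeatedly is the \emph{saturation identity}: for any non-edge $xy$ of $G$, the graph $G+xy$ contains a copy of $F$ that must use the edge $xy$ (else $F\subseteq G$), so $x$ and $y$ lie in a common clique $K_{p_j}$ of that copy and the remaining $p_j-2$ vertices of that clique are common neighbours of $x$ and $y$ in $G$ forming a $(p_j-2)$-clique. I first establish the lower bound $d\ge p_1-2$ in (1). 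Since $e(G)\le e(H(n;p_1,\dots,p_t))<\binom n2$ for the given range of $n$, the graph $G$ is not complete, so $v$ has a non-neighbour $u$; applying the mechanism to $uv$ produces a clique $K_{p_j}$ through $u,v$, whence $v$ has at least $p_j-2\ge p_1-2$ neighbours in $G$, giving $d\ge p_1-2$.

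The matching upper bound $d\le p_1-2$ is the main obstacle. I would argue by contradiction, assuming $\delta(G)\ge p_1-1$. Fix a non-edge $ab$ and a copy of $F$ in $G+ab$ with $ab$ lying in the component $C_{j_0}\cong K_{p_{j_0}}$; then $R=\bigcup_{i\ne j_0}C_i$ is a union of genuine disjoint cliques of $G$ of the required sizes, and $G-R$ contains $K_{p_{j_0}}$ minus the single edge $ab$. If $G-R$ contained a full $K_{p_{j_0}}$, it would combine with the cliques of $R$ to give $F\subseteq G$, a contradiction; hence $G-R$ is $K_{p_{j_0}}$-free. The hypothesis $n>3(p_1-2)+\sum_{i=2}^{t}p_i(p_i+1)$ guarantees that $G-R$ still has many vertices, each of degree at least $p_1-1$ in $G$, while the mechanism endows every non-edge of $G$ with a $(p_1-2)$-clique inside its common neighbourhood. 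The plan is to leverage this strong local clique structure against the global budget $e(G)\le e(H)$: I would show that avoiding $K_{p_{j_0}}$ in the large, high-minimum-degree graph $G-R$ forces so many edges among the non-neighbours of a suitably chosen vertex that $e(G)$ exceeds $e(H)$. Reconciling the Turán-type sparsity forced by $K_{p_{j_0}}$-freeness with the edge lower bounds forced by high minimum degree and saturation is the delicate point, and I expect this counting to be where the real work lies.

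Granting (1), item (2) is immediate. For $w\in\overline{N_G(v)}$ with $w\ne v$, apply the mechanism to the non-edge $vw$: the resulting clique $K_{p_j}$ through $v,w$ has its other $p_j-2$ vertices inside $N$, so $p_j-2\le|N|=p_1-2$, forcing $p_j=p_1$ and identifying those vertices with all of $N$. Thus $\{v,w\}\cup N$ is a clique of $G+vw$, so in $G$ both $N\cup\{v\}$ and $N\cup\{w\}$ are cliques; in particular $w$ is adjacent to every vertex of $N$, that is, $N_G(v)\subseteq N_G(w)$ (the case $w=v$ being trivial). Note that this argument simultaneously shows that $N$ itself is a clique $K_{p_1-2}$, a fact used below.

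Finally, item (3) follows by edge counting. By (1), $|N|=p_1-2$ and $|\overline{N_G(v)}|=n-p_1+2$; by (2) every vertex of $\overline{N_G(v)}$ is adjacent to all of $N$, so $e(N,\overline{N_G(v)})=(p_1-2)(n-p_1+2)$, while $N$ being a clique gives $e(G[N])=\binom{p_1-2}{2}$. Splitting the edges of $G$ across $N$ and $\overline{N_G(v)}$,
\[ e(G)=\binom{p_1-2}{2}+(p_1-2)(n-p_1+2)+e(G[\overline{N_G(v)}]). \]
Comparing with $e(H)=\binom{p_1-2}{2}+(p_1-2)(n-p_1+2)+\sum_{i=2}^{t}\binom{p_i+1}{2}$ through the hypothesis $e(G)\le e(H)$ yields $e(G[\overline{N_G(v)}])\le\sum_{i=2}^{t}\binom{p_i+1}{2}$, as claimed.
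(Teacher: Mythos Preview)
The paper does not prove this lemma; it is quoted from \cite{ref13}, so there is no in-paper argument to compare your proposal against directly.

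Your derivations of the lower bound $d\ge p_1-2$ in (1), of (2) from (1), and of (3) from (1) and (2) are correct and are the standard route. The genuine gap is the upper bound $d\le p_1-2$ in (1), which you yourself flag as ``the main obstacle'' and leave as a sketch. That sketch does not close as stated. The observation that $G-R$ is $K_{p_{j_0}}$-free yields, via Tur\'an, an \emph{upper} bound on its edge count, which pulls in the wrong direction for contradicting $e(G)\le e(H)$; and the crude minimum-degree estimate $e(G)\ge n\,\delta(G)/2\ge n(p_1-1)/2$ is already weaker than $e(H)\sim(p_1-2)n$ once $p_1\ge 3$, so ``high minimum degree'' alone is insufficient. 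A complete argument (as in \cite{ref13}, and for the equal-clique case in \cite{ref12}) requires a sharper count exploiting the saturation mechanism at $v$: every non-neighbour $w$ of $v$ satisfies $|N_G(v)\cap N_G(w)|\ge p_1-2$, so the edges between $N_G(v)$ and $\overline{N_G(v)}$ already number at least $d+(p_1-2)(n-1-d)$, and this has to be combined with further structural information and the explicit hypothesis $n>3(p_1-2)+\sum_{i\ge 2}p_i(p_i+1)$ to force $e(G)>e(H)$ under the assumption $d\ge p_1-1$. You gesture toward such a count but supply none of the arithmetic; as written, part (1) is unproved, and since (2) and (3) rest on it, the proposal is incomplete.
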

Suppose $t \ge 2$, $n > 3(p_1 - 2) + \sum_{i=2}^{t}p_i(p_i + 1)$. Let $G$ be a $K_{p_1} \cup \dots \ K_{p_t}$-saturated graph of order $n$ and $e(G) \le e(H(n; p_1,\dots, p_t))$ and $v$ be a vertex of minimum degree in $G$. Write $S = N_G(v)$. For a vertex $w \in S \backslash \{v\}$, $G + vw$ contains a subgraph $K_{p_1} \cup \dots \cup K_{p_t}$. By Lemma~\ref{lem1}, $d_G(v) = p_{1} - 2$ and then the new edge $vw$ lies in the copy of $K_{p_1}$ induced by $S \cup \{v, w\}$ in $G + vw$. Furthermore, there is a subgraph $K_{p_2} \cup \dots \cup K_{p_t}$ in $G[\overline{S\cup\{v,w\}}]$. In this paper, we always use $H_{vw}=\bigcup\limits_{i=1}^{t-1}H_{vw,i}$ to represent this subgraph $K_{p_2} \cup \dots \cup K_{p_t}$ with $H_{vw,i}\cong K_{p_{i+1}}$ for $i\in [t-1]$.

\begin{lemma}\label{lem2}(\cite{ref13})
Suppose $t \ge 2$, $n > 3(p_1 - 2) +  \sum_{i=2}^{t}p_i(p_i + 1)$. Let $G$ be a $K_{p_1} \cup \dots \cup K_{p_t}$-saturated graph and $e(G) \le e(H(n; p_1, \dots , p_t))$. Let $v$ be a vertex of minimum degree in $V (G)$ and $w\in \overline{S}\backslash \{v\}$. Then,
\begin{enumerate}
\item[$(1)$] there is a subgraph $H_{vw}$ in $G[\overline{S\cup \{v,w\}}]$.
\item[$(2)$] for any vertex $u \in \overline{S} \backslash \{v\}$, if $u$ is adjacent to $w$, then $u \in V(H_{vw})$.
\end{enumerate}
\end{lemma}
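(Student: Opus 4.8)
The plan is to prove both parts from the rigid structure that saturation imposes on the clique absorbing the new edge $vw$, and then to recycle that structure to exhibit a forbidden copy of $K_{p_1}\cup\dots\cup K_{p_t}$ inside $G$. For part~(1), I would note that $w\in\overline{S}\setminus\{v\}$ makes $vw\in\overline{G}$, so by saturation $G+vw$ contains a copy $F$ of $K_{p_1}\cup\dots\cup K_{p_t}$; as $G$ contains no such copy, $F$ must use the edge $vw$, placing $v$ and $w$ in a common factor of $F$. By Lemma~\ref{lem1}(1), $d_G(v)=p_1-2$, hence $v$ has exactly the $p_1-1$ neighbors $S\cup\{w\}$ in $G+vw$, so any factor of $F$ through $v$ has at most $p_1$ vertices. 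Since every factor has order at least $p_1$, this factor is a $K_{p_1}$ on precisely $\{v\}\cup S\cup\{w\}$, and the remaining factors form a $K_{p_2}\cup\dots\cup K_{p_t}$ on vertices disjoint from $\{v\}\cup S\cup\{w\}$. These remaining factors avoid the new edge, so they already sit in $G[\overline{S\cup\{v,w\}}]$; this is the subgraph $H_{vw}$.

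A by-product of this analysis, which I would record for part~(2), is that $G[S]\cong K_{p_1-2}$: all pairs inside $\{v\}\cup S\cup\{w\}$ are edges of $G+vw$, and since neither endpoint of the unique new edge $vw$ lies in $S$, every pair inside $S$ is already an edge of $G$.

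For part~(2), I would argue by contradiction: suppose $u\in\overline{S}\setminus\{v\}$ is adjacent to $w$ yet $u\notin V(H_{vw})$. Since $u,w\in\overline{S}=\overline{N_G(v)}$, Lemma~\ref{lem1}(2) gives $S\subseteq N_G(u)$ and $S\subseteq N_G(w)$, so both $u$ and $w$ are joined to all of $S$; together with the edge $uw$ and the completeness of $G[S]$, the set $S\cup\{u,w\}$ induces a $K_{p_1}$ in $G$. Because $V(H_{vw})\subseteq\overline{S\cup\{v,w\}}$ is disjoint from $S\cup\{w\}$ and $u\notin V(H_{vw})$ by assumption, this $K_{p_1}$ is vertex-disjoint from $H_{vw}\cong K_{p_2}\cup\dots\cup K_{p_t}$. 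Their union is a copy of $K_{p_1}\cup\dots\cup K_{p_t}$ in $G$, contradicting that $G$ is $K_{p_1}\cup\dots\cup K_{p_t}$-saturated; hence $u\in V(H_{vw})$.

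The main obstacle is the disjointness bookkeeping in part~(2): the whole argument hinges on the assembled $K_{p_1}$ sharing no vertex with $H_{vw}$, which is precisely where the assumption $u\notin V(H_{vw})$ and the containment $V(H_{vw})\subseteq\overline{S\cup\{v,w\}}$ are used. The remaining ingredients are just the degree identity and the neighborhood containment supplied by Lemma~\ref{lem1}, so once the clique $S\cup\{u,w\}$ is seen to live in $G$ and to miss $H_{vw}$, the contradiction is immediate.
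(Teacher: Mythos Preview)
Your proof is correct. The paper does not give its own proof of this lemma (it is cited from \cite{ref13}); the short paragraph preceding the lemma statement sketches exactly your argument for part~(1), and your contradiction argument for part~(2) is the standard one that the citation presumably records.
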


 Especially, let $G$ be $K_{p_1} \cup K_{p_2} \cup K_{p_3} \cup K_{p_4}$-saturated. For any vertex $x\in V(H_{vw,i})$, by the fact that $G$ is $K_{p_1} \cup K_{p_2} \cup K_{p_3} \cup K_{p_4}$-saturated, there is a subgraph $H_{vx}=H_{vx,1}\cup H_{vx,2}\cup H_{vx,3}\cong K_{p_2} \cup K_{p_3} \cup K_{p_4}$ in $G[\overline{S\cup\{v,x\}}]$ with $H_{vx,i}\cong K_{p_{i+1}}$ for $i\in [3]$. Let $H$ be an auxiliary graph with $V(H) = V(H_{vw}) \cup V (H_{vx})$ and $E(H) = E(H_{vw}) \cup E(H_{vx})$ (see Figure~\ref{fig1:Auxiliary graph H.}).
            \begin{figure}[H]
	\centering
	\includegraphics[width=0.4\linewidth]{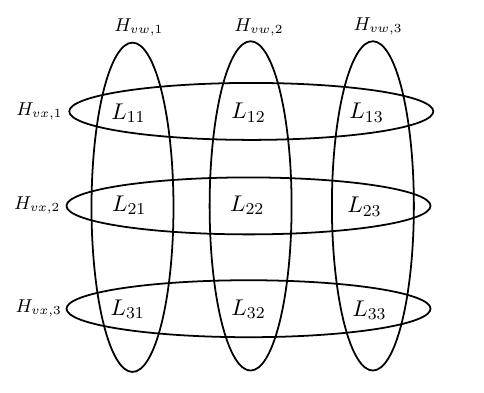} 
	\caption{The auxiliary graph $H$ with $H_{vx,i}\cong K_{p_{i+1}}\cong H_{vw,i}$ for $i\in[3]$.}
	\label{fig1:Auxiliary graph H.}
\end{figure}
            For $1 \le i, j \le 3$, write $V(H_{vx,i}) \cap V(H_{vw,j}) = L_{ij}$ and $|L_{ij}| = {\ell}_{ij}$. Then we have the following Lemma~\ref{lem*}.

\begin{lemma}\label{lem*}
For any vertex $x\in V(H_{vw,i})$ with some $i\in [3]$, we have
\begin{subnumcases} {\label{eq1*}}
p_{j+1}-1 \le {\ell}_{j1}+{\ell}_{j2}+{\ell}_{j3}\le p_{j+1} & for $1\le j\le 3$, {\label{eq11*}}\\
p_{j+1}-1 \le {\ell}_{1j}+{\ell}_{2j}+{\ell}_{3j}\le p_{j+1} & for $j\in\{1,2,3\}\backslash\{i\}$, {\label{eq12*}}\\
{\ell}_{1i}+{\ell}_{2i}+{\ell}_{3i}= p_{i+1}-1. {\label{eq13*}}
\end{subnumcases}
Specifically, if $|V(H_{vx})\backslash V(H_{vw})|=1$, for $k\in\{1,2,3\}\backslash\{i\}$, we have
\begin{align}
{\ell}_{1k}+{\ell}_{2k}+{\ell}_{3k}= p_{k+1}. {\label{eq2*}}
\end{align}
\end{lemma}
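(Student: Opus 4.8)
The plan is to extract from the saturation hypothesis a single forbidden subgraph and then read off every inequality as a statement about two vertex sets of equal size that overlap in all but very few vertices. The structural fact I would record first is this: as noted before Lemma~\ref{lem2}, for each $w\in\overline S\backslash\{v\}$ the new edge $vw$ lies in a copy of $K_{p_1}$ on $S\cup\{v,w\}$ in $G+vw$, so $G[S\cup\{w\}]\cong K_{p_1-1}$; hence $G[S]\cong K_{p_1-2}$ and every vertex of $\overline S\backslash\{v\}$ is adjacent to all of $S$. Consequently, whenever $ab\in E(G)$ with $a,b\in\overline S\backslash\{v\}$, the set $S\cup\{a,b\}$ induces a $K_{p_1}$ in $G$. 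From this I obtain the key claim: $G[\overline S\backslash\{v\}]$ contains no copy of $K_{p_2}\cup K_{p_3}\cup K_{p_4}\cup K_2$ with the four cliques pairwise vertex-disjoint, since such a configuration together with the clique $S$ (which is disjoint from $\overline S$) would exhibit $K_{p_1}\cup K_{p_2}\cup K_{p_3}\cup K_{p_4}$ in $G$, contradicting saturation.

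Next I would use that $H_{vw}$ and $H_{vx}$ are both copies of $K_{p_2}\cup K_{p_3}\cup K_{p_4}$ inside $G[\overline S\backslash\{v\}]$, with $|V(H_{vw})|=|V(H_{vx})|=p_2+p_3+p_4$. For \eqref{eq11*}: if $|V(H_{vx,j})\backslash V(H_{vw})|\ge 2$ for some $j$, then two of the extra vertices, lying in the clique $H_{vx,j}$, form a $K_2$ disjoint from $V(H_{vw})$, so $H_{vw}$ together with this $K_2$ is the forbidden subgraph; hence $|V(H_{vx,j})\backslash V(H_{vw})|\le 1$, i.e. ${\ell}_{j1}+{\ell}_{j2}+{\ell}_{j3}=|V(H_{vx,j})\cap V(H_{vw})|\ge p_{j+1}-1$, while the bound $\le p_{j+1}$ is immediate from $|V(H_{vx,j})|=p_{j+1}$. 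Exchanging the roles of $H_{vw}$ and $H_{vx}$ gives $|V(H_{vw,j})\backslash V(H_{vx})|\le 1$ for every $j$, that is, ${\ell}_{1j}+{\ell}_{2j}+{\ell}_{3j}\ge p_{j+1}-1$, which with the trivial bound $\le p_{j+1}$ yields \eqref{eq12*}. For the sharper equality \eqref{eq13*}, note additionally that $x\in V(H_{vw,i})$ while $x\notin V(H_{vx})$, because $H_{vx}\subseteq G[\overline{S\cup\{v,x\}}]$; hence $|V(H_{vw,i})\backslash V(H_{vx})|\ge 1$, and combined with the bound $\le 1$ just established this forces ${\ell}_{1i}+{\ell}_{2i}+{\ell}_{3i}=p_{i+1}-1$.

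Finally, for \eqref{eq2*}: if $|V(H_{vx})\backslash V(H_{vw})|=1$, then since the two sets have equal cardinality $|V(H_{vw})\backslash V(H_{vx})|=1$ as well; as block $i$ already contributes exactly one vertex to $V(H_{vw})\backslash V(H_{vx})$, every other block satisfies $V(H_{vw,k})\subseteq V(H_{vx})$ for $k\ne i$, i.e. ${\ell}_{1k}+{\ell}_{2k}+{\ell}_{3k}=p_{k+1}$. I expect essentially all the difficulty to lie in the structural claim of the first paragraph --- that an edge inside $\overline S\backslash\{v\}$ always completes through $S$ to a $K_{p_1}$, so a single spare $K_2$ disjoint from a copy of $K_{p_2}\cup K_{p_3}\cup K_{p_4}$ is fatal; after that everything is counting with equal-size sets, the only point needing care being to keep the two indices of ${\ell}_{ij}$ in the right order (the first indexing the block of $H_{vx}$, the second the block of $H_{vw}$).
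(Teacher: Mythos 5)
Your proof is correct and follows essentially the same route as the paper: the lower bounds come from the same forbidden-configuration argument (two adjacent vertices of one copy outside the other copy, completed through $S$ to a $K_{p_1}$ via Lemma~\ref{lem1}(2), yielding $K_{p_1}\cup K_{p_2}\cup K_{p_3}\cup K_{p_4}$ in $G$), the equality (\ref{eq13*}) from $x\notin V(H_{vx})$, and (\ref{eq2*}) from the equal sizes of $V(H_{vw})$ and $V(H_{vx})$. Your set-difference bookkeeping for (\ref{eq2*}) replaces the paper's summation of the $\ell_{ij}$, but this is only a cosmetic difference.
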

\pf
Since $x\in V(H_{vw,i})$ for some $i\in [3]$. The right side of inequalities~(\ref{eq11*}) and (\ref{eq12*}) are clearly true as $H_{vx,i}\cong K_{p_{i+1}}\cong H_{vw,i}$ for $i\in [3]$, so we only need to prove the left side. We first claim that ${\ell}_{j1}+{\ell}_{j2}+{\ell}_{j3}\ge p_{j+1}-1$ for $1\le j\le 3$. To contrary, assume that ${\ell}_{j1}+{\ell}_{j2}+{\ell}_{j3} \le p_{j+1}-2$ for some $j\in[3]$, then there are two adjacent vertices $y$ and $z$ in $G[V(H_{vx,j}) \backslash V(H_{vw})]$. By Lemma~\ref{lem1}(2), $G[S \cup \{y, z\}]\cong K_{p_1}$. Hence, $G[S \cup \{y, z\}] \cup H_{vw}$ is a subgraph isomorphic to $K_{p_1}\cup \dots \cup K_{p_4}$ in $G$, which is a contradiction. Similarly, ${\ell}_{1j}+{\ell}_{2j}+{\ell}_{3j}\ge p_{j+1}-1$ for $1\le j\le 3$. Thus inequalities~(\ref{eq11*}) and (\ref{eq12*}) hold. Since ${\ell}_{1i}+{\ell}_{2i}+{\ell}_{3i}\le |V(H_{vw,i})-\{x\}|$, ${\ell}_{1i}+{\ell}_{2i}+{\ell}_{3i} \le p_{i+1}-1$. Therefore the equality~(\ref{eq13*}) holds.

If $|V(H_{vx})\backslash V(H_{vw})|=1$, then $\sum_{i,j=1}^{3}{\ell}_{ij}=p_2+p_3+p_4-1$. By the equality~(\ref{eq13*}), we have that $\sum\limits_{k\in\{1,2,3\}\backslash\{i\}}({\ell}_{1k}+{\ell}_{2k}+{\ell}_{3k})= p_2+p_3+p_4-p_{i+1}$. Also by the inequality (\ref{eq12*}), the equality~(\ref{eq2*}) holds.

\qed

\begin{lemma}\label{lem**}(\cite{ref13})
$H(n;p,\underbrace{q,\dots, q}_{t-1})$ is a $K_p \cup (t - 1)K_q$-saturated graph for $t\ge 2$ and $2 \le p < q$.
\end{lemma}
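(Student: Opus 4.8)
The plan is to verify directly the two defining conditions of saturation for $G:=H(n;p,q,\dots,q)=K_{p-2}\lor(B_1\cup\dots\cup B_{t-1}\cup I)$, where each $B_i\cong K_{q+1}$, $I\cong I_m$ with $m=n-t+3-p-(t-1)q$, and the clique $A:=K_{p-2}$ is completely joined to every remaining vertex. The structural fact I would use repeatedly is that, since $A\sim V(G)\setminus A$ while the ``bottom part'' $B_1\cup\dots\cup B_{t-1}\cup I$ is a disjoint union whose components are the blocks $B_i$ together with the singletons of $I$, every clique $C$ of $G$ splits as $C=(C\cap A)\cup(C\cap R)$, where $C\cap R$ is a clique lying inside a \emph{single} component of the bottom part. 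Hence a $K_q$ must use at least $q-(p-2)=q-p+2\ge 3$ bottom vertices and a $K_p$ must use at least $2$, and in both cases these bottom vertices all lie in one block $B_i$ (a singleton of $I$ is too small).

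First I would prove the non-containment $K_p\cup(t-1)K_q\not\subseteq G$. Suppose a copy existed; each of its $t$ cliques (one $K_p$ and $t-1$ copies of $K_q$) confines its bottom part to a single block. The key claim is that no block can absorb the bottom parts of two of these cliques. Indeed, if two $K_q$'s used $a_1,a_2$ vertices of $A$ and $b_1,b_2$ vertices of a common $B_i$, then $a_j+b_j=q$, $b_1+b_2\le q+1$, and disjointness forces $a_1+a_2\le p-2$; combining gives $q-1\le a_1+a_2\le p-2$, i.e. $q\le p-1$, contradicting $q>p$. Likewise, a $K_q$ and the $K_p$ sharing a block would require $a_1+a_2=(p+q)-(b_1+b_2)\ge p-1>p-2$ vertices of $A$, again impossible. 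Thus the $t$ cliques need $t$ distinct blocks, but only $t-1$ blocks exist; this pigeonhole contradiction proves non-containment.

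Then I would verify the completion property: for every $uv\in E(\overline{G})$ the graph $G+uv$ contains $K_p\cup(t-1)K_q$. Since $A$ is joined to everything, both endpoints of any non-edge lie in the bottom part, leaving three cases according to whether $u,v$ lie in two distinct blocks, in one block and $I$, or both in $I$. In each case $A\cup\{u,v\}$ becomes a clique of size $(p-2)+2=p$ in $G+uv$, supplying the required $K_p$. The $t-1$ copies of $K_q$ are then read off from the blocks: a block disjoint from $\{u,v\}$ still contains $K_{q+1}\supseteq K_q$, while a block from which one endpoint was removed retains exactly a $K_q$. Since at most two blocks lose a vertex, and there are $t-1$ blocks in total (the two-distinct-block case only arising when $t\ge 3$), this yields precisely $t-1$ pairwise disjoint $K_q$'s, all disjoint from $A\cup\{u,v\}$.

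I expect the only genuine obstacle to be the counting step inside the non-containment argument: establishing that each target clique must sink at least $q-p+2$ (respectively $2$) vertices into a single block, and that the budget of only $p-2$ vertices in $A$ then forbids any two target cliques from sharing a block. Once this is in place, the pigeonhole conclusion and the three-case completion argument are routine.
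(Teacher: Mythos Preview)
Your proof is correct. The paper itself does not prove this lemma---it is cited from reference [13]---so there is no in-paper argument to compare against; only the $t=2$ consequence (that $H(n;p_1,p_2)$ is $K_{p_1}\cup K_{p_2}$-free) is later used, as the base case of the induction in the sufficiency proof of Theorem~1.1. Your direct pigeonhole argument for non-containment (each target clique must place at least two bottom vertices in a single block $B_i$, and the budget $|A|=p-2$ then forbids two target cliques from sharing a block) is the natural self-contained route, and your three-case completion check coincides with the completion argument the paper gives for general $H(n;p_1,\dots,p_t)$ in the proof of Theorem~1.1.
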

    \baselineskip=14pt
    \section{The proof of Theorem~\ref{th2}}\label{sec 3}

\pf
For brevity, write $G = H(n; p_1, \dots, p_t) = H_1 \lor (H_2 \cup \dots
 \cup H_t \cup I_{n-3-t-\sum_{i=1}^{t}p_i})$, where $H_1\cong K_{p_1-2}$ and $H_i\cong K_{p_i+1}$ for $2\le i \le t$.

First, we prove the necessity. If $p_i+1\le p_{i+1}\le p_i+p_1-1$ for some $2\le i\le t-1$, then there is a subgraph $H_{i+1}'\cong K_{p_{i+1}}$ in $G[V(H_1)\cup V(H_i)]$ with $V(H_i)\subseteq V(H_{i+1}')$ and a subgraph $H_i'\cong K_{p_1+p_i}$ in $G[(V(H_1)\cup V(H_{i+1}))\backslash V(H_{i+1}')]$. Also note that there is a subgraph which is isomorphic to $K_{p_2} \cup \dots \cup K_{p_{i-1}}\cup K_{p_{i+2}} \cup \dots \cup K_{p_t}$ in $G[\overline{V(H_1)\cup V(H_i) \cup  V(H_{i+1})}]$. Therefore there is a subgraph which is isomorphic to $K_{p_1} \cup \dots \cup K_{p_t}$ in $G$, a contradiction.

Next, we will prove the sufficiency. For any nonadjacent vertices $u$ and $v$ in $G$, it is clear that $u$ and $v$ lie in distinct $H_i$'s or $u,v\in \overline{V(H_1)\cup \dots \cup V(H_t)}$. Thus, in $G'=G + uv$, we have $G'[V(H_1)\cup \{u,v\}]\cong K_{p_1}$ and there is a subgraph isomorphic to $K_{p_2}\cup \dots \cup K_{p_t}$ in $G[V (G) \backslash (V(H_1) \cup \{u, v\})]$.

We will prove that $G$ is $K_{p_1} \cup \dots \cup K_{p_t}$-free by induction on $t$. By Lemma~\ref{lem**}, we know that $H(n;p_1,p_2)$ is $K_{p_1}\cup K_{p_2}$-free. We assume that $H(n;p_1, \dots , p_{j})$ is $K_{p_1} \cup \dots \cup K_{p_{j}}$-free for $j\le t-1$. For $G=H_1 \lor (H_2 \cup \dots \cup H_t \cup I_{n-3-t-\sum_{i=1}^{t}p_i})$, suppose to the contrary that there is a subgraph $H=G_1\cup \dots \cup G_t \cong K_{p_1} \cup \dots \cup K_{p_t}$ in $G$ and $G_i\cong K_{p_i}$ for $1\le i\le t$. Let $s$ be the smallest number such that $p_s = p_t$. So $p_{s}-p_{s-1}\ge p_1\ge 4$ and therefore $|V(H_1)\lor V(H_k)|\le p_1-2+(p_{s-1}+1)=p_1+p_{s-1}-1<p_s$ for any $2\le k\le s-1$. Also since there is no edge between $H_{t_1}$ and $H_{t_2}$ for $t_1,t_2\in \{2,\dots, t\}$, it is impossible that $G_{a}$ contains vertices from both $H_{t_1}$ and $H_{t_2}$ for $a\in \{s,s+1,\dots, t\}$. Thus $G_{a}$ lies in $H_1\lor H_i$ for $a\in \{s,s+1,\dots,t\}$ and some $s\le i\le t$. Note that $p_i<|V(H_i)|=p_i+1$ for $s\le i\le t$, so there is a subgraph $H'=G_1'\cup \dots \cup G_t'\cong K_{p_1}\cup \dots \cup K_{p_t}$ of $G$ such that $V(G_{a}')\subseteq V(H_{a})$ for each $a\in \{s,s+1,\dots, t\}$. It implies that $H_1\lor (H_2\cup \dots \cup H_{s-1})$ has a subgraph which is isomorphic to $K_{p_1}\cup \dots \cup K_{p_{s-1}}$. According to the inductive hypothesis, we know that $H_1\lor (H_2\cup \dots \cup H_{s-1})$ is $K_{p_1} \cup \dots \cup K_{p_{s-1}}$-free, a contradiction. So $G$ is $K_{p_1}\cup \dots \cup K_{p_t}$-free.\qed
    \baselineskip=14pt
    \section{The proof of Theorem~\ref{th1}}\label{sec 4}
\pf
Let $G$ be an extremal graph for $K_{p_1} \cup K_{p_2} \cup K_{p_3} \cup K_{p_4}$ of order $n > 3(p_1 - 2) +  \sum_{i=2}^{4}p_i(p_i + 1)$ with $p_{i+1} - p_i \ge p_1$ for $2 \le i\le 3$ and $4\le p_1\le p_2$. Suppose that $e(G) < e(H(n; p_1, p_2, p_3, p_4))$. Let $v$ be a vertex of minimum degree in $G$. By Lemma~\ref{lem1} (1), we have $d_{G}(v) = p_1 - 2$. Write $S = N_G(v)$ and $\overline{S} = V (G) \backslash N_G(v)$. By Lemma~\ref{lem1} (2), one has that
\begin{equation}\label{eq1}
e(G[\overline{S}])< \binom{p_2+1}{2}+\binom{p_3+1}{2}+\binom{p_4+1}{2}.
\end{equation}
 Let $w$ be a vertex in $\overline{S}\backslash \{v\}$, by Lemma~\ref{lem2}, we know that $H_{vw} = H_{vw,1} \cup H_{vw,2} \cup H_{vw,3}$ is a subgraph of $G[\overline{S\cup \{v,w\}}]$ with $H_{vw,i} \cong K_{p_{i+1}}$ for $1\le i\le 3$. Firstly, we prove Lemma~\ref{lem3} and Lemma~\ref{lem6} which will be used to prove Theorem~\ref{th1}.
 \begin{lemma}\label{lem3}
    For any vertex $u\in V(H_{vw,k})$ ($1\le k\le 3$), $u$ has at least one neighbor in $V(G) \backslash (S \cup V(H_{vw,k}))$.
    \end{lemma}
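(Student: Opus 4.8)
The plan is to argue by contradiction: assume some $u\in V(H_{vw,k})$ has $N_G(u)\subseteq S\cup V(H_{vw,k})$. Since $u\in V(H_{vw,k})\subseteq\overline S$ and every vertex of $\overline S$ is joined to all of $S$, this forces $N_G(u)=S\cup(V(H_{vw,k})\setminus\{u\})$; in particular $N_G(u)\cap\overline S=V(H_{vw,k})\setminus\{u\}$.

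First I would pass from $u$ to one of its clique-neighbours. As $u\in\overline S$ we have $uv\notin E(G)$; pick any $x\in V(H_{vw,k})\setminus\{u\}$, so also $xv\notin E(G)$. The saturation of $G$ then yields, as in the discussion preceding Lemma~\ref{lem*}, a copy $H_{vx}=H_{vx,1}\cup H_{vx,2}\cup H_{vx,3}\cong K_{p_2}\cup K_{p_3}\cup K_{p_4}$ inside $G[\overline{S\cup\{v,x\}}]$ with the $K_{p_1}$ of $G+vx$ equal to $S\cup\{v,x\}$, and Lemma~\ref{lem*} applies to $x$ with index $k$. Equality~(\ref{eq13*}) gives $|V(H_{vx})\cap V(H_{vw,k})|=p_{k+1}-1$; since $x\notin V(H_{vx})$ this means $V(H_{vw,k})\setminus\{x\}\subseteq V(H_{vx})$, so $u\in V(H_{vx,m})$ for some $m$. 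Then $V(H_{vx,m})\setminus\{u\}$ consists of neighbours of $u$ lying in $V(H_{vx})\subseteq\overline S$, hence $V(H_{vx,m})\setminus\{u\}\subseteq N_G(u)\cap\overline S=V(H_{vw,k})\setminus\{u\}$; thus $V(H_{vx,m})\subseteq V(H_{vw,k})$, and since $x\in V(H_{vw,k})\setminus V(H_{vx,m})$ we get $p_{m+1}=|V(H_{vx,m})|\le p_{k+1}-1$. Because $p_2<p_3<p_4$ this gives $m<k$, which is already a contradiction when $k=1$; this settles the case $k=1$.

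For $k\in\{2,3\}$ we have $1\le m<k$ and, from $p_{i+1}-p_i\ge p_1$, the gap $p_{k+1}-p_{m+1}\ge(k-m)p_1\ge p_1$, so the surplus $V(H_{vw,k})\setminus V(H_{vx,m})$ is a clique of $G$ inside $\overline S$ of size at least $p_1$. The goal is then to build a forbidden $K_{p_1}\cup K_{p_2}\cup K_{p_3}\cup K_{p_4}$ in $G$ from: the $K_{p_{m+1}}$ $V(H_{vx,m})$, the remaining two parts of $H_{vx}$ (cliques of the two sizes in $\{p_2,p_3,p_4\}\setminus\{p_{m+1}\}$), and a $K_{p_1}$ taken from the surplus clique; since these sizes are exactly $p_1,p_2,p_3,p_4$ and $G$ is $K_{p_1}\cup K_{p_2}\cup K_{p_3}\cup K_{p_4}$-free, producing such a copy is a contradiction. (One also has the alternative of using the three parts of $H_{vw}$ for the cliques of sizes $p_2,p_3,p_4$ and $S$ together with an edge of $\overline S$ for the $K_{p_1}$, and switching between these two schemes as needed.)

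The main obstacle is exactly the pairwise-disjointness in the previous paragraph. Equality~(\ref{eq13*}) forces $V(H_{vw,k})\subseteq V(H_{vx})\cup\{x\}$, so the surplus clique $V(H_{vw,k})\setminus V(H_{vx,m})$ meets the other two parts of $H_{vx}$, and a $K_{p_1}$ drawn naively from it need not avoid them. Resolving this requires a careful accounting of which vertices of $\overline S$ are genuinely free, using the bound $e(G[\overline S])<\binom{p_2+1}{2}+\binom{p_3+1}{2}+\binom{p_4+1}{2}$ from (\ref{eq1}), the fact (all of $\overline S$ is joined to $S$) that $S$ plus any edge of $\overline S$ is a $K_{p_1}$, the large gaps $p_{i+1}-p_i\ge p_1$, and — if necessary — re-running the argument of the second paragraph for another vertex of $V(H_{vw,k})$ to liberate the vertices needed. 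I expect the $k=2$ and $k=3$ sub-cases to need somewhat different bookkeeping because of the different sets of admissible indices $m$.
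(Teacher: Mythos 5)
Your argument is complete only for $k=1$, and there it is essentially the paper's own Case~1: from $N_G(u)\cap\overline S=V(H_{vw,k})\setminus\{u\}$ and Equality~(\ref{eq13*}) you force $u$ into some part $H_{vx,m}$ with $V(H_{vx,m})\subseteq V(H_{vw,k})\setminus\{x\}\cup\{u\}$, hence $p_{m+1}\le p_{k+1}-1$, which is impossible when $k=1$. For $k\in\{2,3\}$, however, you have written a plan rather than a proof, and the plan's central step fails for exactly the reason you yourself flag: since Equality~(\ref{eq13*}) forces $V(H_{vw,k})\setminus\{x\}\subseteq V(H_{vx})$, the ``surplus'' clique $V(H_{vw,k})\setminus V(H_{vx,m})$ consists, apart from $x$ itself, entirely of vertices of the other two parts of $H_{vx}$, so no $K_{p_1}$ disjoint from $H_{vx}$ can be taken from it and the forbidden copy of $K_{p_1}\cup K_{p_2}\cup K_{p_3}\cup K_{p_4}$ you intend to assemble is simply not available; the alternative scheme (three parts of $H_{vw}$ plus $S$ together with an edge of $G[\overline S]$ outside $V(H_{vw})$) is likewise not justified, because you never produce such an edge.

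What actually closes these cases in the paper is not a direct construction but edge counting against the bound~(\ref{eq1}). For $k=3$ one gets $V(H_{vx,j})\subseteq V(H_{vw,3})$ for some $j\in\{1,2\}$, and the $\min\{p_3(p_4-p_3),\,p_2(p_4-p_2)\}$ edges of the clique $H_{vw,3}$ joining $V(H_{vx,j})$ to $V(H_{vw,3})\setminus V(H_{vx,j})$ lie outside $E(H_{vx})$, already pushing $e(G[\overline S])$ above $\sum_{i=2}^{4}\binom{p_i+1}{2}$. For $k=2$ one first pins down $\ell_{12}=p_2$ and $\ell_{11}=\ell_{13}=0$, and then must split into three subcases according to whether $\ell_{21}+\ell_{22}+\ell_{23}$ and $\ell_{31}+\ell_{32}+\ell_{33}$ equal $p_3$ or $p_3-1$, respectively $p_4$ or $p_4-1$; each subcase is settled by its own count, exploiting the one or two vertices of $V(H_{vx})\setminus V(H_{vw})$, cross-edge terms such as $\ell_{23}(\ell_{21}+\ell_{22})+\ell_{33}(\ell_{31}+\ell_{32})$, and, in one subcase, the already-established case $k=3$ applied to $H_{vw,3}$. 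This quantitative bookkeeping---precisely what you defer with ``resolving this requires a careful accounting $\dots$ if necessary re-running the argument''---is the substance of the lemma for $k=2,3$, so the proposal as it stands has a genuine gap.
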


\begin{proof}[Proof of Lemma~\ref{lem3}]
        \renewcommand\qedsymbol{$\blacksquare$}
        \renewenvironment{proof}{
            \noindent\textbf{\color{ecolor}\proofname\;} \songti}{
            \hfill $\blacksquare$}
            By the contrary, we assume that there exists a vertex $x_1\in V(H_{vw,k})$ such that $N_{G}[x_1] \subseteq S\cup V(H_{vw,k})$. Since $H_{vw,k}$ is a complete graph, $V(H_{vw,k})\subseteq N_{G}[x_1]$. By Lemma~\ref{lem1}(2), one has that $S\subseteq N_{G}[x_1]$. Thus $N_{G}[x_1]=S\cup V(H_{vw,k})$. For any vertex $x\in V(H_{vw,k})\backslash \{x_1\}$, there is a subgraph $H_{vx}=H_{vx,1}\cup H_{vx,2}\cup H_{vx,3}\cong K_{p_2} \cup K_{p_3} \cup K_{p_4}$ in $G[\overline{S\cup \{v,x\}}]$ with $H_{vx,i}\cong K_{p_{i+1}}$ for $i\in [3]$. By Lemma~\ref{lem2}(2), $x_1\in V(H_{vx})$. For $1 \le i, j \le 3$, write $V(H_{vx,i}) \cap V(H_{vw,j}) = L_{ij}$ and $|L_{ij}| = {\ell}_{ij}$ (see Figure~\ref{fig1:Auxiliary graph H.}). Since $x_1\in V(H_{vw,k})$ for some $1\le k\le 3$, we should consider the following cases.
            \vspace{4mm}

            \noindent \textbf{Case 1.} $k=1$.

            Since $N_{G}[x_1] = S\cup V(H_{vw,1})$ and $H_{vw,1}\cong K_{p_2}$, then $d_{G[\overline{S}\backslash\{x\}]}(x_1)=p_2-2$, so we have $x_1\notin V(H_{vx})$, a contradiction.
            \vspace{4mm}

            \noindent \textbf{Case 2.} $k=3$.

            It is impossible that $x_1\in V(H_{vx,3})$, as $d_{G[\overline{S}\backslash\{x\}]}(x_1)=p_4-2$ and $H_{vx,3}\cong K_{p_4}$. So $x_1\in V(H_{vx,j})$ for $j\in\{1,2\}$. Note that $N_{G[\overline{S}]}[x_1]=V(H_{vw,3})$ and ${\ell}_{j3}\le p_{j+1}$, then $x_1\in L_{j3}$ and ${\ell}_{j3}=p_{j+1}$. Since $H_{vw,3}\cong K_{p_4}$, $|V(H_{vw,3})-L_{j3}|=p_4-p_{j+1}$. So $e(L_{j3},V(H_{vw,3})-L_{j3})\ge \min \{p_3(p_4-p_3),p_2(p_4-p_2)\}$. Hence
            		
\begin{equation}
\begin{aligned}
e(G[\overline{S}])&\ge e(H_{vx,1})+e(H_{vx,2})+e(H_{vx,3})+e(L_{j3},V(H_{vw,3})-L_{j3})
\\&\ge\sum_{i=2}^{4}\binom{p_i}{2}+\min \{p_3(p_4-p_3),p_2(p_4-p_2)\}
\\&>\sum_{i=2}^{4}\binom{p_i}{2}+p_2+p_3+p_4 =\sum_{i=2}^{4}\binom{p_i+1}{2},
\end{aligned}
\nonumber
\end{equation}
a contradiction to Equation~(\ref{eq1}).
\vspace{4mm}

            \noindent \textbf{Case 3.} $k=2$.

             Note that $N_{G[\overline{S}\backslash\{x\}]}[x_1]=V(H_{vw,2})\backslash \{x\}$ and $p_2-1<|V(H_{vw,2})\backslash \{x,x_1\}|=p_3-2< \min\{p_3-1,p_4-1\}$, and since $H_{vx,i}\cong K_{p_{i+1}}$ for $1\le i\le 3$, one has that $x_1\in L_{12}$, ${\ell}_{12}=p_2$ and ${\ell}_{11}={\ell}_{13}=0$.

            Since $x\notin V(H_{vx})$, we have that $\sum_{i=1}^{3}({\ell}_{1i}+{\ell}_{2i}+{\ell}_{3i})\le p_2+p_3+p_4-1$. Note that $\sum_{i=1}^{3}{\ell}_{1i}=p_2$, thus $\sum_{i=1}^{3}({\ell}_{2i}+{\ell}_{3i})\le p_3+p_4-1$. By Inequality~(\ref{eq11*}) of Lemma~\ref{lem*} for $2\le j\le3$, we will distinguish the following three subcases to obtain contradictions.
            \vspace{4mm}

            \noindent \textbf{Subcase 3.1.} ${\ell}_{21}+{\ell}_{22}+{\ell}_{23}=p_3-1$ and ${\ell}_{31}+{\ell}_{32}+{\ell}_{33}=p_4$.

            Since $H_{vx,2}\cong K_{p_3}$ and ${\ell}_{21}+{\ell}_{22}+{\ell}_{23}=p_3-1$, there is only one vertex $u^*\in V(H_{vx})\backslash V(H_{vw})$ and $u^*\in V(H_{vx,2})$. Since $x\in V(H_{vw,2})$, by Equality~(\ref{eq2*}) of Lemma~\ref{lem*} and also ${\ell}_{11}={\ell}_{13}=0$, we have ${\ell}_{21}+{\ell}_{31}=p_2$ and ${\ell}_{23}+{\ell}_{33}=p_4$. As $x\in V(H_{vw,2})$, ${\ell}_{12}=p_2$ and Equality~(\ref{eq13*}) of Lemma~\ref{lem*}, we have that ${\ell}_{22}+{\ell}_{32}=p_3-p_2-1$. By ${\ell}_{21}+{\ell}_{22}+{\ell}_{23}=p_3-1$, we know that $0\le {\ell}_{23} \le p_3-1$.

             We will prove that $1\le {\ell}_{23}\le p_3-1$. Suppose that ${\ell}_{23}=0$, since ${\ell}_{21}+{\ell}_{22}+{\ell}_{23}=p_3-1$, ${\ell}_{21}\le p_2$ and ${\ell}_{22}\le p_3-p_2-1$, one has that ${\ell}_{21}=p_2$ and ${\ell}_{22}=p_3-p_2-1$. Thus there are at least ${\ell}_{21}{\ell}_{22}=p_2(p_3-p_2-1)$ edges between $V(H_{vw,1})$ and $V(H_{vw,2})$. Also, $e(u^*,V(H_{vw,1}) \cup V(H_{vw,2}))\ge p_2+p_3-p_2-1=p_3-1$ as $V(H_{vw,1})\cup (V(H_{vw,2})\backslash (\{x\}\cup L_{12}))\subseteq N_{G[\overline{S}]}(u^*)$. By the proof of Case 2 of Lemma~\ref{lem3}, we know that for any vertex $u\in V(H_{vw,3})$, $u$ has at least one neighbor in $V(G) \backslash (S \cup V(H_{vw,3}))$. It implies that $e(V(H_{vw,3}),\overline{S\cup V(H_{vw,3})})\ge p_4$. Hence
            \begin{equation}
            \begin{aligned}
            e(G[\overline{S}])&\ge e(H_{vw,1})+e(H_{vw,2})+e(H_{vw,3})+e(V(H_{vw,1}),V(H_{vw,2}))+e(u^*,V(H_{vw,1}) \cup V(H_{vw,2}))
            \\&+e(V(H_{vw,3}),\overline{S\cup V(H_{vw,3})})
            \\&\ge\sum_{i=2}^{4}\binom{p_i}{2}+p_2(p_3-p_2-1)+(p_3-1)+p_4
            \\&>\sum_{i=2}^{4}\binom{p_i}{2}+p_2+p_3+p_4,
            \end{aligned}
            \nonumber
            \end{equation}
            a contradiction to Equation (\ref{eq1}).

            Following that, we consider $1\le {\ell}_{23}\le p_3-1$. Recall that ${\ell}_{21}+{\ell}_{22}+{\ell}_{23}=p_3-1$, ${\ell}_{31}+{\ell}_{32}+{\ell}_{33}=p_4$ and ${\ell}_{23}+{\ell}_{33}=p_4$. We have
            \begin{equation}
            \begin{aligned}
            e(G[\overline{S}])&\ge e(H_{vw,1})+e(H_{vw,2})+e(H_{vw,3})+{\ell}_{23}({\ell}_{21}+{\ell}_{22})+{\ell}_{33}({\ell}_{31}+{\ell}_{32})+e(u^*,V(H_{vx,2})\backslash \{u^*\})
            \\&=\sum_{i=2}^{4}\binom{p_i}{2}+{\ell}_{23}(p_3-1-{\ell}_{23})+{\ell}_{33}(p_4-{\ell}_{33})+p_3-1
            \\&=\sum_{i=2}^{4}\binom{p_i}{2}+{\ell}_{23}(p_3-1-{\ell}_{23})+(p_4-{\ell}_{23}){\ell}_{23}+p_3-1.
            \end{aligned}
            \nonumber
            \end{equation}
            Let $f(x)=x(p_3-1-x)+(p_4-x)x+p_3-1$ ($1\le x \le p_3-1$), then $f(x)\ge \min\{f(1),f(p_3-1)\}=f(1)=2p_3+p_4-4\ge p_2+p_3+p_4$. Therefore, $e(G[\overline{S}])\ge \sum_{i=2}^{4}\binom{p_i}{2}+p_2+p_3+p_4=\sum_{i=2}^{4}\binom{p_i+1}{2}$, a contradiction to Equation (\ref{eq1}).
            \vspace{4mm}

            \noindent \textbf{Subcase 3.2.} ${\ell}_{21}+{\ell}_{22}+{\ell}_{23}=p_3$ and ${\ell}_{31}+{\ell}_{32}+{\ell}_{33}=p_4-1$.

             Since $H_{vx,3}\cong K_{p_4}$ and ${\ell}_{31}+{\ell}_{32}+{\ell}_{33}=p_4-1$, there is only one vertex $u'\in V(H_{vx})\backslash V(H_{vw})$ and $u'\in V(H_{vx,3})$. Note that ${\ell}_{11}=0$, ${\ell}_{12}=p_2$ and $x\in V(H_{vw,2})$, we have that ${\ell}_{21}+{\ell}_{31}+{\ell}_{22}+{\ell}_{32}\le p_3-1$. It implies that $0\le {\ell}_{31}+{\ell}_{32}\le p_3-1$. If ${\ell}_{31}={\ell}_{32}=0$, by Equalities~(\ref{eq13*}) and (\ref{eq2*}) of Lemma~\ref{lem*}, we have ${\ell}_{22}=p_3-p_2-1$ and ${\ell}_{21}=p_2$. Therefore by the condition of this subcase, we have ${\ell}_{23}=1$ and ${\ell}_{33} = p_4-1$. Hence
            \begin{equation}
            \begin{aligned}
            e(G[\overline{S}])&\ge e(H_{vw,1})+e(H_{vw,2})+e(H_{vw,3})+{\ell}_{21}{\ell}_{22}+{\ell}_{23}({\ell}_{21}+{\ell}_{22})+e(u',V(H_{vx,3})\backslash \{u'\})
            \\&=\sum_{i=2}^{4}\binom{p_i}{2}+p_2(p_3-p_2-1)+(p_3-1)+(p_4-1)
            \\&>\sum_{i=2}^{4}\binom{p_i}{2}+p_2+p_3+p_4=\sum_{i=2}^{4}\binom{p_i+1}{2},
            \end{aligned}
            \nonumber
            \end{equation}
            a contradiction to Equation (\ref{eq1}).

            Thus, $1 \le {\ell}_{31}+{\ell}_{32} \le p_3-1$. For simplicity, let ${\ell}_{31}+{\ell}_{32}=a$, then ${\ell}_{33}=p_4-1-a$. Note that $x\in V(H_{vw,2})$ and ${\ell}_{13}=0$, by Equality~(\ref{eq2*}) of Lemma~\ref{lem*}, we know that ${\ell}_{23}=a+1$. Therefore
            \begin{equation}
            \begin{aligned}
            e(G[\overline{S}])&\ge e(H_{vw,1})+e(H_{vw,2})+e(H_{vw,3})+{\ell}_{23}({\ell}_{21}+{\ell}_{22})+{\ell}_{33}({\ell}_{31}+{\ell}_{32})+e(u',V(H_{vx,3})\backslash \{u'\})
            \\&=\sum_{i=2}^{4}\binom{p_i}{2}+(a+1)(p_3-1-a)+(p_4-1-a)a+p_4-1.
            \end{aligned}
            \nonumber
            \end{equation}
            Let $g(x)=(x+1)(p_3-1-x)+(p_4-1-x)x+p_4-1$ ($1\le x \le p_3-1$), then $g(x)\ge \min\{g(1),g(p_3-1)\}=g(1)=2p_3+2p_4-7> p_2+p_3+p_4$. Therefore, $e(G[\overline{S}])> \sum_{i=2}^{4}\binom{p_i}{2}+p_2+p_3+p_4=\sum_{i=2}^{4}\binom{p_i+1}{2}$, a contradiction to Equation (\ref{eq1}).
            \vspace{4mm}

            \noindent \textbf{Subcase 3.3.} ${\ell}_{21}+{\ell}_{22}+{\ell}_{23}=p_3-1$ and ${\ell}_{31}+{\ell}_{32}+{\ell}_{33}=p_4-1$.

            As $H_{vx,2}\cong K_{p_3}$, $H_{vx,3}\cong K_{p_4}$ and the condition of this subcase, there are two vertices $u' \in V(H_{vx,2})\backslash V(H_{vw})$ and $u'' \in V(H_{vx,3})\backslash V(H_{vw})$, respectively.

            If ${\ell}_{23}=0$, by Inequality~(\ref{eq12*}) of Lemma~\ref{lem*} and ${\ell}_{13}=0$, we have ${\ell}_{33}\ge p_4-1$. Furthermore, recall that ${\ell}_{31}+{\ell}_{32}+{\ell}_{33}=p_4-1$, then ${\ell}_{33} = p_4-1$ and therefore ${\ell}_{31}={\ell}_{32}=0$. Since $x\in V(H_{vw,2})$, ${\ell}_{11}=0$ and ${\ell}_{12}=p_2$, by inequalities~(\ref{eq12*}) and (\ref{eq13*}) of Lemma~\ref{lem*}, ${\ell}_{21}\ge p_2-1$ and ${\ell}_{22}=p_3-p_2-1$. Hence
            \begin{equation}
            \begin{aligned}
            e(G[\overline{S}])&\ge e(H_{vw,1})+e(H_{vw,2})+e(H_{vw,3})+{\ell}_{21}{\ell}_{22}+e(u',V(H_{vx,2})\backslash \{u'\})+e(u'',V(H_{vx,3})\backslash \{u''\})
            \\&=\sum_{i=2}^{4}\binom{p_i}{2}+(p_2-1)(p_3-p_2-1)+(p_3-1)+(p_4-1)
            \\&>\sum_{i=2}^{4}\binom{p_i}{2}+p_2+p_3+p_4=\sum_{i=2}^{4}\binom{p_i+1}{2},
            \end{aligned}
            \nonumber
            \end{equation}
            a contradiction to Equation (\ref{eq1}).

            If ${\ell}_{23} \ge 1$. We have ${\ell}_{23}\le p_3-1$ due to ${\ell}_{21}+{\ell}_{22}+{\ell}_{23}=p_3-1$. Thus by the left side of Inequality~(\ref{eq12*}) of Lemma~\ref{lem*} and ${\ell}_{13}=0$, we have ${\ell}_{33}\ge p_4-p_3\ge 1$. Hence
            \begin{equation}
            \begin{aligned}
            e(G[\overline{S}])&\ge e(H_{vw,1})+e(H_{vw,2})+e(H_{vw,3})+{\ell}_{23}({\ell}_{21}+{\ell}_{22})+{\ell}_{33}({\ell}_{31}+{\ell}_{32})
            \\&+e(u',V(H_{vx,2})\backslash \{u'\})+e(u'',V(H_{vx,3})\backslash \{u''\})
            \\&\ge\sum_{i=2}^{4}\binom{p_i}{2}+({\ell}_{21}+{\ell}_{31})+({\ell}_{31}+{\ell}_{32})+p_3-1+p_4-1
            \\&\ge\sum_{i=2}^{4}\binom{p_i}{2}+(p_2-1)+(p_3-p_2-1)+(p_3-1)+(p_4-1)
            \\&\ge\sum_{i=2}^{4}\binom{p_i}{2}+p_2+p_3+p_4=\sum_{i=2}^{4}\binom{p_i+1}{2},
            \end{aligned}
            \nonumber
            \end{equation}
            a contradiction to Equation (\ref{eq1}).

\end{proof}

\begin{lemma}\label{lem6}
    Let $A=\{u|d_{G[\overline{S}]}(u)\ge 1,u\in \overline{S}\}$. Then $|A|\ge p_2+p_3+p_4+2$ and $d_{G[\overline{S}]}(u)\ge p_2-1$ for any $u\in A$.
    \end{lemma}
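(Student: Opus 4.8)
The plan is to treat the two assertions separately; the degree bound is immediate from Lemma~\ref{lem2}, while the size bound needs a short extremal step followed by a rigidity analysis.

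\emph{Degree bound.} Let $u\in A$ and pick a neighbour $z$ of $u$ with $z\in\overline S$. Since $N_G(v)=S$ and $u\in\overline S$, the vertices $u,v$ are non-adjacent, so $z\ne v$; also $u\ne v$ because every neighbour of $v$ lies in $S$, so $d_{G[\overline S]}(v)=0$ and $v\notin A$. Thus $z\in\overline S\setminus\{v\}$, and Lemma~\ref{lem2}(1) yields a copy $H_{vz}=H_{vz,1}\cup H_{vz,2}\cup H_{vz,3}$ of $K_{p_2}\cup K_{p_3}\cup K_{p_4}$ inside $G[\overline{S\cup\{v,z\}}]$; by Lemma~\ref{lem2}(2), $u\in V(H_{vz})$, say $u\in V(H_{vz,k})$ with $k\in[3]$. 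As $V(H_{vz,k})\subseteq\overline S$ and $H_{vz,k}\cong K_{p_{k+1}}$, the vertex $u$ has at least $p_{k+1}-1\ge p_2-1$ neighbours in $\overline S$, which is the claim.

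\emph{The inequality $|A|\ge p_2+p_3+p_4+1$.} First $A\ne\emptyset$: for any $w\in\overline S\setminus\{v\}$ the copy $H_{vw,1}\cong K_{p_2}$ forces $e(G[\overline S])\ge 1$. Take $u_0\in A$ with a neighbour $z\in\overline S\setminus\{v\}$ and the associated $H_{vz}$. Every vertex of $V(H_{vz})$ lies in a clique of size $\ge p_2\ge 2$ inside $\overline S$, so $V(H_{vz})\subseteq A$; moreover $z\in A$ (it has the neighbour $u_0\in\overline S$) and $z\notin V(H_{vz})$, so $|A|\ge p_2+p_3+p_4+1$.

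\emph{Excluding $|A|=p_2+p_3+p_4+1$.} Suppose $A=V(H_{vz})\sqcup\{z\}$ and write $C_k=V(H_{vz,k})$, so $A=C_1\sqcup C_2\sqcup C_3\sqcup\{z\}$. Using $z\in A$, pick a neighbour $z'\in\overline S\setminus\{v\}$ of $z$; Lemma~\ref{lem2}(2) gives $z\in V(H_{vz'})$, and since $V(H_{vz'})\subseteq A$ has $p_2+p_3+p_4$ vertices while $z'\in A\setminus V(H_{vz'})$ with $z'\ne z$, we get $V(H_{vz'})=A\setminus\{z'\}$ and $z'\in V(H_{vz})$; thus $A\setminus\{z'\}$ has a second partition $C_1'\sqcup C_2'\sqcup C_3'$ into cliques $C_k'=V(H_{vz',k})\cong K_{p_{k+1}}$. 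Call an edge of $G[\overline S]$ \emph{good} if both endpoints lie in a common $C_m$; there are exactly $\sum_m\binom{p_{m+1}}{2}$ good edges, and by Equation~(\ref{eq1}) at most $p_2+p_3+p_4-1$ non-good ones. Every edge inside a clique $C_k'$ with endpoints in distinct $C_m$'s (or meeting $z$) is non-good, and over $k$ these edge sets are disjoint, so $\sum_k\bigl(\binom{p_{k+1}}{2}-\sum_m\binom{|C_k'\cap C_m|}{2}\bigr)\le p_2+p_3+p_4-1$. The row sums $\sum_m|C_k'\cap C_m|$ equal $p_{k+1}$ (or $p_{k+1}-1$ if $z\in C_k'$) and the column sums $\sum_k|C_k'\cap C_m|$ equal $p_{m+1}$ (or $p_{m+1}-1$ if $z'\in C_m$); combining this with the gaps $p_3-p_2,\ p_4-p_3\ge p_1\ge 4$, each $C_k'$ must sit inside a single $C_m$ up to one exchanged vertex, and tracing this back shows $z$ is adjacent to all of one clique $C_j$. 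Hence $C_j\cup\{z\}\cong K_{p_{j+1}+1}$ lies in $\overline S$, and the other two cliques $C_{k'},C_{k''}$ send at most $p_{k'+1}+p_{k''+1}-1$ edges to the rest of $A$.

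To finish I would invoke saturation of $G$. Since $e(C_{k'},C_{k''})\le p_{k'+1}+p_{k''+1}-1<p_{k'+1}p_{k''+1}$, there is a non-edge $ab$ with $a\in C_{k'}$, $b\in C_{k''}$, so $G+ab$ contains a copy of $K_{p_1}\cup K_{p_2}\cup K_{p_3}\cup K_{p_4}$. The $p_1-2$ vertices of $S$ and the sparsity of the cross-edges confine the clique of this copy that uses $ab$ to size $O(1)+p_1$ beyond the largest single block, forcing the two largest cliques of the copy to live essentially inside $C_{k'}\cup C_{k''}$; but those two cliques together need more than $p_{k'+1}+p_{k''+1}=|C_{k'}\cup C_{k''}|$ vertices, a contradiction. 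Hence $|A|\ge p_2+p_3+p_4+2$. The main obstacle is exactly this last stretch: converting the near-tight edge count of Equation~(\ref{eq1}) into the rigid block structure of the two clique partitions, and then ruling out that rigid configuration via the saturation property with a clean case analysis on where the augmenting clique can sit; the degree bound and the bound $|A|\ge p_2+p_3+p_4+1$ are routine by comparison.
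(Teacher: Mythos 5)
Your degree bound is fine and matches the paper's argument in substance (the paper rederives Lemma~\ref{lem2}(2) on the spot rather than citing it), but the size bound is where the proposal breaks down, and you acknowledge as much. What you actually prove is only $|A|\ge p_2+p_3+p_4+1$; the exclusion of equality rests on two unverified steps. First, the ``rigidity'' claim that the near-tight count from Equation~(\ref{eq1}) forces each $C_k'$ to sit inside a single $C_m$ up to one exchanged vertex, and that this makes $z$ complete to some $C_j$, is asserted, not proved: Equation~(\ref{eq1}) allows up to $p_2+p_3+p_4-1$ extra edges, which is far more slack than a one-vertex exchange per block, so the intersection pattern of the two partitions is not pinned down by the row/column sums and the gaps alone. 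Second, the concluding saturation step (``the sparsity of the cross-edges confines the clique using $ab$ to size $O(1)+p_1$ beyond the largest block, forcing the two largest cliques of the copy to live essentially inside $C_{k'}\cup C_{k''}$'') is a sketch; the cliques of the new copy may also reuse vertices of $S$, of $C_j\cup\{z\}$, and of $\overline S\setminus A$ is empty of edges but not of vertices, so nothing confines them as claimed without a real case analysis. As written, $|A|\ge p_2+p_3+p_4+2$ is not established.

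The paper gets the $+2$ in one stroke, and you could substitute its argument directly: by Equation~(\ref{eq1}), $G[V(H_{vw})]$ is not complete, so there is a non-edge $v_1v_2$ joining two different cliques of $H_{vw}$; saturation gives a copy $H\cong K_{p_1}\cup K_{p_2}\cup K_{p_3}\cup K_{p_4}$ in $G+v_1v_2$ with $p_1+p_2+p_3+p_4$ vertices, while $|S\cup V(H_{vw})|=p_1+p_2+p_3+p_4-2$, so $H$ contains at least two vertices $x_1,x_2\in\overline S\setminus V(H_{vw})$. Since the new edge has both ends in $V(H_{vw})$, each $x_i$ has $d_G(x_i)\ge p_1-1>|S|$, hence a neighbour in $\overline S$, so $x_1,x_2\in A$; together with $V(H_{vw})\subseteq A$ this yields $|A|\ge p_2+p_3+p_4+2$ with no equality case to analyse. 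I recommend replacing your rigidity/saturation sketch with this counting argument.
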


\begin{proof}[Proof of Lemma~\ref{lem6}]
        \renewcommand\qedsymbol{$\blacksquare$}
        \renewenvironment{proof}{
            \noindent\textbf{\color{ecolor}\proofname\;} \songti}{
            \hfill $\blacksquare$}
 If $G[V(H_{vw})]=K_{p_2+p_3+p_4}$, then $e(G[\overline{S}])= \binom{p_2+p_3+p_4}{2}> \sum_{i=2}^{4}\binom{p_i+1}{2}$, a contradiction to Equation (\ref{eq1}). So there exist two vertices $v_1$ and $v_2$ such that $v_1$ and $v_2$ belong to two different cliques of $H_{vw}$ and $v_1v_2\notin E(G)$. Consider the graph $G+v_1v_2$ and there is a subgraph $H=H_1\cup H_2 \cup H_3 \cup H_4$ of $G+v_1v_2$ with $H_i\cong K_{p_i}$ for $i\in [4]$. Note that $|S\cup V(H_{vw})|=p_1+p_2+p_3+p_4-2$, then there are at least two vertices $x_1$ and $x_2$ of $\overline{S}\backslash V(H_{vw})$ in $H$. Therefore $d_G(x_i)\ge p_1-1$ and $d_{G[\overline{S}]}(x_i)\ge 1$ as $|S|=p_1-2$ for $i=1,2$. Hence $x_1,x_2\in A$. Furthermore, since $d_{G[{\overline{S}}]}(x)\ge p_2-1>1$ for any $x\in V(H_{vw})$, $V(H_{vw})\subseteq A$.  So $|A|\ge p_2+p_3+p_4+2$.

For any vertex $y\in A$, choose a vertex $y_1\in N_{G[\overline{S}]}(y)$ and there is a subgraph $G[S\cup \{v,y_1\}]\cup H'$ in $G+vy_1$ with $G[S\cup \{v,y_1\}]\cong K_{p_1}$ and $H'\cong K_{p_2}\cup K_{p_3}\cup K_{p_4}$. We have that $y$ must be in $H'$, otherwise $G[S\cup \{y,y_1\}]\cup H'$ is a subgraph of $G$ which is isomorphic to $K_{p_1}\cup K_{p_2}\cup K_{p_3}\cup K_{p_4}$, a contradiction. It implies that $d_{G[\overline{S}]}(y)\ge p_2-1$.

\end{proof}

The following two claims are crucial for the proof of Theorem~\ref{th1}.

\begin{claim}\label{lem4}
   For $4\le p_1\le p_2$, let $G^*$ be a graph such that $H=H_1\lor (H_2\cup H_3)$ is a subgraph of $G^*$ with $H_1\cong K_{p_1-2}$, $H_2\cong K_{p_2}$ and $H_3$ is an empty graph with $V(H_3)=\overline{V(H_1)\cup V(H_2)}$, but $K_{p_1}\cup K_{p_2}$ is not a subgraph of $G^*$; also there is a vertex $v$ such that $v\in V(H_3)$ and $N_{G^*}(v)=V(H_1)$. If there is a subgraph which is isomorphic to $K_{p_1}\cup K_{p_2}$ in $G^*+vw'$ with any $w'\in \overline {V(H_1)\cup \{v\}}$, then $e(\overline{H_1}) \ge \binom{p_2+1}{2}$.
    \end{claim}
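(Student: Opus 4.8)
The plan is to read $e(\overline{H_1})$ as $e(G^*[\overline{V(H_1)}])$ and reduce everything to an edge count on $B:=\overline{V(H_1)\cup\{v\}}$. Since $\overline{V(H_1)}=\{v\}\cup B$ and $v$ is isolated in $G^*[\overline{V(H_1)}]$ (its neighbours are exactly $V(H_1)$), we get $e(\overline{H_1})=e(G^*[B])$, so it suffices to prove $e(G^*[B])\ge\binom{p_2+1}{2}$. The first thing I would establish is a local deletion property: \emph{for every $w'\in B$, $G^*[B\setminus\{w'\}]$ contains a copy of $K_{p_2}$}. Fix $w'\in B$; since $G^*\supseteq H_1\lor(H_2\cup H_3)$, the vertex $w'$ is adjacent to all of $V(H_1)$, and $d_{G^*}(v)=|V(H_1)|=p_1-2$. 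Take a copy $A\cup A'\cong K_{p_1}\cup K_{p_2}$ in $G^*+vw'$. If $v\notin A\cup A'$, then $A,A'$ are disjoint cliques of $G^*$, contradicting $K_{p_1}\cup K_{p_2}\not\subseteq G^*$. If $v\in A$, then $A\setminus\{v\}\subseteq N_{G^*+vw'}(v)=V(H_1)\cup\{w'\}$ forces $A=\{v\}\cup V(H_1)\cup\{w'\}$, so $A'\subseteq B\setminus\{w'\}$ is a $K_{p_2}$ in $G^*$. If $v\in A'\setminus A$, then $A'\setminus\{v\}\subseteq V(H_1)\cup\{w'\}$ has $p_2-1$ vertices while $|V(H_1)\cup\{w'\}|=p_1-1$, so $p_2\le p_1$; with $p_2\ge p_1$ this gives $p_1=p_2$ and $A'=\{v\}\cup V(H_1)\cup\{w'\}$, whence $A\subseteq B\setminus\{w'\}$ is a $K_{p_1}=K_{p_2}$ in $G^*$. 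In every case $K_{p_2}\subseteq G^*[B\setminus\{w'\}]$.

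Next I would record a second structural fact: $G^*[V(H_3)]$ is $K_{p_2}$-free, because $V(H_1)\cup V(H_2)$ is a clique of size $p_1-2+p_2\ge p_1$ disjoint from $V(H_3)$, so a $K_{p_2}$ inside $V(H_3)$ would yield $K_{p_1}\cup K_{p_2}\subseteq G^*$; in particular, writing $W:=V(H_3)\setminus\{v\}$, the graph $G^*[W]$ is $K_{p_2}$-free. Now $B=V(H_2)\cup W$ is a disjoint union and, since $H_2\cong K_{p_2}$, $e(G^*[B])=\binom{p_2}{2}+e(G^*[W])+e(V(H_2),W)$. Suppose for contradiction $e(G^*[B])\le\binom{p_2+1}{2}-1$, i.e.\ $e(G^*[W])+e(V(H_2),W)\le p_2-1$. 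Fix $w'\in V(H_2)$ and, by the deletion property, a copy $Q'$ of $K_{p_2}$ in $G^*[B\setminus\{w'\}]$; set $k=|Q'\cap W|$. Then $k\le p_2-1$ ($G^*[W]$ is $K_{p_2}$-free), and $Q'\cap V(H_2)\subseteq V(H_2)\setminus\{w'\}$ forces $k\ge1$ with $|Q'\cap V(H_2)|=p_2-k$. If $k\ge2$, the clique $Q'$ alone contributes $\binom{k}{2}$ edges inside $W$ and $k(p_2-k)$ edges between $V(H_2)$ and $W$; since $\binom{k}{2}+k(p_2-k)$ is increasing in $k$ on $\{2,\dots,p_2-1\}$ and equals $2p_2-3>p_2-1$ at $k=2$, this contradicts our bound.

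Hence $k=1$, so $Q'\cap V(H_2)=V(H_2)\setminus\{w'\}$ and $Q'\cap W=\{z\}$ with $z$ adjacent to all of $V(H_2)\setminus\{w'\}$; thus $e(V(H_2),W)\ge p_2-1$, which together with $e(G^*[W])+e(V(H_2),W)\le p_2-1$ forces $e(G^*[W])=0$, $e(V(H_2),W)=p_2-1$, every $V(H_2)$-to-$W$ edge incident to $z$, and $N_{G^*}(z)\cap V(H_2)=V(H_2)\setminus\{w'\}$. Now pick $w''\in V(H_2)\setminus\{w'\}$ (possible as $p_2\ge2$) and a copy $Q''$ of $K_{p_2}$ in $G^*[B\setminus\{w''\}]$. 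Since $G^*[W]$ is edgeless, $|Q''\cap W|\le1$; with $|Q''\cap V(H_2)|\le p_2-1$ this gives $|Q''\cap W|=1$, say $Q''\cap W=\{z''\}$, and $Q''\cap V(H_2)=V(H_2)\setminus\{w''\}$, which contains $w'$. As $z''\in W$ has a neighbour in $V(H_2)$, we must have $z''=z$; but then $z\in Q''$ is adjacent to $w'\in Q''$, contradicting $N_{G^*}(z)\cap V(H_2)=V(H_2)\setminus\{w'\}$. This contradiction yields $e(\overline{H_1})=e(G^*[B])\ge\binom{p_2+1}{2}$.

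I expect the delicate part to be the local deletion property in the first paragraph: pinning down exactly where the new copy of $K_{p_1}\cup K_{p_2}$ in $G^*+vw'$ can sit, and in particular absorbing the degenerate case $p_1=p_2$, in which the appended edge $vw'$ could a priori be used by the $K_{p_2}$ rather than the $K_{p_1}$ --- this is exactly where the hypothesis $p_1\le p_2$ is used. Once ``$G^*[B\setminus\{w'\}]\supseteq K_{p_2}$ for every $w'\in B$'' is available, the rest is the two-case edge count above, whose only computation is the elementary monotonicity of $\binom{k}{2}+k(p_2-k)$ on $\{2,\dots,p_2-1\}$.
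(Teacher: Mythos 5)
Your proposal is correct and follows essentially the same route as the paper: apply the saturation hypothesis at a vertex $w'$ of $H_2$ to force the resulting $K_{p_2}$ to be $(V(H_2)\setminus\{w'\})\cup\{z\}$ with $z$ outside $V(H_2)$, which already gives $\binom{p_2}{2}+p_2-1$ edges, and then rule out the (near-)tight configuration by applying the hypothesis at a second vertex of $H_2$. Your version is slightly more careful than the paper's in pinning down where $v$ can lie in the new copy (in particular absorbing the $p_1=p_2$ case) and in handling $|Q'\cap W|\ge 2$ by counting rather than by observing that $V(H_3)$ is independent in $G^*$, but the core argument is the same.
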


\begin{proof}[Proof of Claim~\ref{lem4}]
        \renewcommand\qedsymbol{$\blacksquare$}
        \renewenvironment{proof}{
            \noindent\textbf{\color{ecolor}\proofname\;} \songti}{
            \hfill $\blacksquare$}
            For convenience, let $S_1=V(H_1)$. For a vertex $u_1\in V(H_2)$, by the condition of this claim, we know that there is a subgraph $G_1\cup G_2\cong K_{p_1}\cup K_{p_2}$ in $G'=G^*+u_1v$ such that $G_i\cong K_{p_i}$ with $i\in [2]$. Since $N_{G^*}(v)=S_1$, $G'[S_1\cup \{u_1,v\}]=G_1$. Note that $K_{p_1}\cup K_{p_2}$ is not a subgraph of $G^*$ and $S_1\subseteq N_{G^*}(w')$ for any $w'\in \overline {S_1}$, so $w_1w_2\notin E(G^*)$ for any $w_1,w_2 \in V(H_3)$, which implies that there is at most one vertex of $V(H_3)$ in $V(G_2)$ and $V(H_2)\backslash\{u_1\}\subseteq V(G_2)$. So there is a vertex $u$ of $V(H_3)$ in $V(G_2)$ and $\{u\}\sim V(H_2)\backslash \{u_1\}$. Thus $e(\overline{H_1}) \ge \binom{p_2}{2}+p_2-1$. If $e(\overline{H_1}) = \binom{p_2}{2}+p_2-1$, then $E(\overline{H_1}) = E(H_2)\cup E(u, V(H_2)\backslash \{u_1\})$. For $u_2\in V(H_2)\backslash \{u_1\}$, $K_{p_1}\cup K_{p_2}$ is not a subgraph of $G^*+vu_2$, a contradiction. So $e(\overline{H_1}) \ge \binom{p_2+1}{2}$.
\end{proof}

\begin{claim}\label{lem5}
   For $4\le p_1\le p_2$ and $p_3-p_2\ge p_1$, let $G^*$ be a graph such that $H=H_1\lor (H_2\cup H_3\cup H_4)$ is a subgraph of $G^*$ with $H_1\cong K_{p_1-2}$, $H_i\cong K_{p_i}$ for $i\in \{2,3\}$ and $H_4$ is an empty graph with $V(H_4)=\overline{V(H_1)\cup V(H_2)\cup V(H_3)}$, but $K_{p_1}\cup K_{p_2}\cup K_{p_3}$ is not a subgraph of $G^*$; also there is a vertex $v$ such that $v\in V(H_4)$ and $N_{G^*}(v)=V(H_1)$. If there is a subgraph which is isomorphic to $K_{p_1}\cup K_{p_2}\cup K_{p_3}$ in $G^*+vw'$ for any $w'\in \overline {V(H_1)\cup \{v\}}$, then $e(\overline{H_1}) \ge \binom{p_2+1}{2}+\binom{p_3+1}{2}$.
    \end{claim}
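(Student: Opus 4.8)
The plan is to reduce Claim~\ref{lem5} to two applications of Claim~\ref{lem4} and then dispose of a bounded list of degenerate configurations directly. Write $S_1=V(H_1)$, so $|S_1|=p_1-2$ and $S_1\subseteq N_{G^*}(w')$ for every $w'\in\overline{S_1}$ (as $H_1\lor(H_2\cup H_3\cup H_4)\subseteq G^*$). Two preliminary observations. First, $V(H_4)$ is independent in $G^*$: an edge inside $V(H_4)$, say $w_1w_2$, would give $G^*[S_1\cup\{w_1,w_2\}]\cong K_{p_1}$, a clique vertex-disjoint from $H_2\cup H_3\cong K_{p_2}\cup K_{p_3}$, contradicting $K_{p_1}\cup K_{p_2}\cup K_{p_3}\not\subseteq G^*$. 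Second, for each $w'\in\overline{S_1}\backslash\{v\}$, in $G^*+vw'$ the vertex $v$ has exactly $p_1-1$ neighbours $S_1\cup\{w'\}$ and the new edge $vw'$ lies in the $K_{p_1}$ of the guaranteed $K_{p_1}\cup K_{p_2}\cup K_{p_3}$; hence that $K_{p_1}$ equals $S_1\cup\{v,w'\}$ and a copy of $K_{p_2}\cup K_{p_3}$ lies in $G^*$ on $\overline{S_1}\backslash\{v,w'\}$.

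Set $G_A:=G^*[V(H_1)\cup V(H_2)\cup V(H_4)]$ and $G_B:=G^*[V(H_1)\cup V(H_3)\cup V(H_4)]$. Each satisfies the structural premises of Claim~\ref{lem4} with clique sizes $(p_1,p_2)$, resp. $(p_1,p_3)$: the join structure is inherited, the ``empty block'' is exactly $V(H_4)$ (independent, above), $v$ retains $N(v)=V(H_1)$, and $K_{p_1}\cup K_{p_2}\not\subseteq G_A$ --- else adjoining the clique $H_3$ on the disjoint set $V(H_3)$ would force $K_{p_1}\cup K_{p_2}\cup K_{p_3}\subseteq G^*$ --- and likewise $K_{p_1}\cup K_{p_3}\not\subseteq G_B$. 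The only non-automatic hypothesis of Claim~\ref{lem4} is its saturation-type condition: for $w'$ in the empty block it holds because the forced $K_{p_1}=S_1\cup\{v,w'\}$ misses $V(H_2)$ (resp. $V(H_3)$), leaving $H_2$ (resp. $H_3$) available; and for $w'=u_1\in V(H_2)$ it holds iff some vertex of $V(H_4)\backslash\{v\}$ is adjacent to all of $V(H_2)\backslash\{u_1\}$ (by independence of $V(H_4)$, exactly what is needed to build a $K_{p_2}$ inside $(V(H_2)\backslash\{u_1\})\cup(V(H_4)\backslash\{v\})$). Call such a $u_1$ \emph{deficient} if no such dominating vertex exists, and define deficient vertices of $V(H_3)$ the same way.

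If no vertex is deficient, Claim~\ref{lem4} applies to both $G_A$ and $G_B$, yielding $e(G^*[V(H_2)\cup V(H_4)])\ge\binom{p_2+1}{2}$ and $e(G^*[V(H_3)\cup V(H_4)])\ge\binom{p_3+1}{2}$; since $V(H_4)$ is edgeless these two edge sets are disjoint, so $e(\overline{H_1})\ge\binom{p_2+1}{2}+\binom{p_3+1}{2}$. Otherwise fix a deficient $u_1\in V(H_2)$ (a deficient vertex of $V(H_3)$ is handled analogously, where the hypothesis $p_3-p_2\ge p_1$ enters the estimates), and take the copy $F_2\cup F_3\cong K_{p_2}\cup K_{p_3}$ of $G^*$ on $(V(H_2)\backslash\{u_1\})\cup V(H_3)\cup(V(H_4)\backslash\{v\})$ from the second observation. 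Let $(a,b,c)$ and $(a',b',c')$ record how many vertices $F_2$, resp.\ $F_3$, uses from the three blocks $V(H_2)\backslash\{u_1\}$, $V(H_3)$, $V(H_4)\backslash\{v\}$; then $a+b+c=p_2$, $a'+b'+c'=p_3$, $c,c'\le 1$, $a+a'\le p_2-1$, $b+b'\le p_3$, and deficiency forces $b\ge 1$. Counting $E(H_2)$, $E(H_3)$, and those edges of $F_2$ and $F_3$ that are not internal to one block gives
\[
e(\overline{H_1})\ \ge\ \binom{p_2}{2}+\binom{p_3}{2}+\big(c(p_2-1)+ab\big)+\big(c'(p_3-1)+a'b'\big);
\]
a short optimization of the bracketed quantity over the admissible integer points --- using $p_2\ge 4$ and $p_3-p_2\ge p_1\ge 4$ --- shows it is at least $p_2+p_3$ in every case except the single configuration $F_2=(V(H_2)\backslash\{u_1\})\cup\{w\}$ with $w\in V(H_3)$ and $F_3=(V(H_3)\backslash\{w\})\cup\{z\}$ with $z\in V(H_4)\backslash\{v\}$, where it equals only $p_2+p_3-2$.

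In this remaining configuration the identified edges are exactly $E(H_2)$, $E(H_3)$, the $p_2-1$ edges from $w$ to $V(H_2)\backslash\{u_1\}$, and the $p_3-1$ edges from $z$ to $V(H_3)\backslash\{w\}$, so two more edges are still needed. Rerun the argument at a second $u_1'\in V(H_2)\backslash\{u_1\}$: if $u_1'$ is non-deficient its dominating vertex gives $p_2-1$ new edges from $V(H_4)$ to $V(H_2)$ (the previously counted edges meeting $V(H_4)$ run only to $V(H_3)$), and $2(p_2-1)+(p_3-1)\ge p_2+p_3$; if $u_1'$ is deficient but not again in the degenerate configuration, the optimization already gives $\ge p_2+p_3$; if it is again degenerate with witness $w'\in V(H_3)$ and $w'\ne w$, the $p_2-1$ edges from $w'$ to $V(H_2)\backslash\{u_1'\}$ are new and again $2(p_2-1)+(p_3-1)\ge p_2+p_3$. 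In the sole surviving case $w'=w$ for every choice of $u_1'$, so $w$ is adjacent to all of $V(H_2)$, the count is already $\binom{p_2}{2}+\binom{p_3}{2}+p_2+(p_3-1)$ and one more edge suffices: if the $V(H_4)$-vertices arising from the various $u_1'$ are not all equal, a second one gives another $p_3-1$ edges to $V(H_3)\backslash\{w\}$ and $p_2+2(p_3-1)\ge p_2+p_3$; if they all equal $z$ and $z\sim w$ then $z$ is adjacent to all of $V(H_3)$; and if they all equal $z$ but $z\not\sim w$, then were $\overline{H_1}$ to have no further edge, for $y\in V(H_3)\backslash\{w\}$ no vertex would dominate $V(H_3)\backslash\{y\}$, so $G^*+vy$ could not contain $K_{p_1}\cup K_{p_2}\cup K_{p_3}$, a contradiction. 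Hence $e(\overline{H_1})\ge\binom{p_2+1}{2}+\binom{p_3+1}{2}$ in all cases. The crux is this last stage: checking in each degenerate sub-case that the extra edge(s) squeezed out by a further edge-addition are genuinely not among those already counted; the transfer to Claim~\ref{lem4} and the integer optimization are routine.
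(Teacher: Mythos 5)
Your proposal is correct, but it is organized quite differently from the paper's proof. The paper treats $H_3$ asymmetrically: it first eliminates a vertex of $V(H_3)$ whose closed neighbourhood is $S_1\cup V(H_3)$, then splits according to whether every added edge $vw'$ (for $w'$ outside $V(H_3)$) admits a copy whose $K_{p_3}$ is exactly $H_3$ --- in which case it deletes $V(H_3)$ and invokes Claim~\ref{lem4} once, adding $e(H_3)+p_3$ --- and otherwise fixes a $w'$ for which every copy's $K_{p_3}$ meets $V(H_3)$ in at most $p_3-1$ vertices, finishing with a case analysis on that intersection and a rigidity argument (``if the edge set were exactly the counted one, the copy required by $G^*+vv'$ for another vertex $v'$ cannot exist''). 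You instead apply Claim~\ref{lem4} twice, to $G^*[V(H_1)\cup V(H_2)\cup V(H_4)]$ and $G^*[V(H_1)\cup V(H_3)\cup V(H_4)]$, in the case where every vertex of $V(H_2)\cup V(H_3)$ has a dominating partner in $V(H_4)$ (your verification that the hypotheses of Claim~\ref{lem4} transfer, including the independence of $V(H_4)$ and the $K_{p_1}\cup K_{p_2}$-freeness of the restriction, is sound, and the two edge sets are indeed disjoint); in the complementary ``deficient'' case you bypass Claim~\ref{lem4} entirely and bound the forced cross-edges of the copy $F_2\cup F_3$ by an integer optimization over its block profile, with a single exceptional configuration closed out by the same style of rigidity argument as the paper uses. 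I checked your optimization claim: the minimum of $ab+c(p_2-1)+a'b'+c'(p_3-1)$ over the admissible profiles is indeed $p_2+p_3-2$, attained only at the stated configuration, and all other profiles give at least $p_2+p_3$, with the tight cases using exactly $p_2\ge 4$ and $p_3-p_2\ge 4$; the $V(H_3)$-deficient case you dismiss as analogous does go through (there deficiency forces $a'\ge 1$ and the lone exceptional profile is $F_3=(V(H_3)\setminus\{u_1\})\cup\{x\}$ with $x\in V(H_2)$, $F_2=(V(H_2)\setminus\{x\})\cup\{z\}$), and your final contradictions for the rigid configurations are valid. The trade-off: your route is more symmetric and concentrates the numerical work in one optimization, at the price of re-verifying Claim~\ref{lem4}'s hypotheses twice and of two steps (the optimization and the $H_3$-analogue) that you assert rather than write out; the paper's route uses Claim~\ref{lem4} only once and keys its case structure directly to how the guaranteed copies intersect $V(H_3)$, at the price of a longer chain of sub-cases.
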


\begin{proof}[Proof of Claim~\ref{lem5}]
        \renewcommand\qedsymbol{$\blacksquare$}
        \renewenvironment{proof}{
            \noindent\textbf{\color{ecolor}\proofname\;} \songti}{
            \hfill $\blacksquare$}
For convenience, let $S_1=V(H_1)$. If there is a vertex $u_1\in V(H_3)$ satisfying that $N_{G^*}[u_1]=S_1\cup V(H_3)$, we will consider the graph $G^*+vx$ with $x\in V(H_3)\backslash \{u_1\}$. By the condition of this claim, there is a subgraph $G_1\cup G_2\cup G_3$ with $G_i\cong K_{p_i}$ in $G'=G^*+vx$. Since $N_{G^*}(v) = S_1$, $G'[S_1\cup \{v,x\}]=G_1$. It implies that $V(G_2\cup G_3)\subseteq V(\overline{S_1\cup \{v,x\}})$. It can be proved that $u_1\in V(G_2\cup G_3)$, otherwise $G^*[S_1\cup \{u_1,x\}]\cup G_2\cup G_3\cong K_{p_1}\cup K_{p_2}\cup K_{p_3}$ in $G^*$, a contradiction. Note that $p_2<d_{G^*\backslash V(G_1)}[u_1]=p_3-1<p_3$ as $N_{G^*}[u_1]=S_1\cup V(H_3)$, so $u_1\in V(H_3)\cap V(G_2)$ and $|V(H_3)\cap V(G_2)|=p_2$. Since $H_3\cong K_{p_3}$, we have that $|V(H_3)|-|V(H_3)\cap V(G_2)|=p_3-p_2$. Hence
\begin{equation}
\begin{aligned}
e(\overline{H_1})&\ge e(G_2)+e(G_3)+|V(H_3)\cap V(G_2)|\times(|V(H_3)|-|V(H_3)\cap V(G_2)|)
\\&=\binom{p_2}{2}+\binom{p_3}{2}+p_2(p_3-p_2)
\\&>\binom{p_2}{2}+\binom{p_3}{2}+p_2+p_3 =\binom{p_2+1}{2}+\binom{p_3+1}{2}.
\end{aligned}
\nonumber
\end{equation}
Then the claim holds. Next we only consider that $N_{G^*}(u)\cap V(\overline{H_1\cup H_3})\neq \emptyset$ for any $u\in V(H_3)$.

If for any $w'\in \overline{S_1\cup V(H_3)\cup \{v\}}$, there exists a subgraph $G_{vw',1}\cup G_{vw',2}\cup G_{vw',3}$ in $G^*+vw'$ with $G_{vw',i}\cong K_{p_i}$ for $i\in[2]$ and $G_{vw',3}=H_3$, we will consider the graph $G'=G^*[V(G^*)-V(H_3)]$. Note that $K_{p_1-2}\cup K_{p_2}$ is a subgraph of $G'$, but $K_{p_1}\cup K_{p_2}$ is not, and $G_{vw',1}\cup G_{vw',2}\cong K_{p_1}\cup K_{p_2}$ in $G'+vw'$ with $w'\in V(G')-(V(H_1)\cup \{v\})$. By Claim~\ref{lem4}, we know that $e(G'[V(G')-V(H_1)])\ge \binom{p_2+1}{2}$ and therefore $e(G^*[\overline{V(H_1)\cup V(H_3)}])\ge \binom{p_2+1}{2}$. Also since $N_{G^*}(u)\cap (V(\overline{H_1\cup H_3}))\neq \emptyset$ for any $u\in V(H_3)$, one has that there are at least $|V(H_3)|=p_3$ edges between $V(H_3)$ and $V(\overline{H_1\cup H_3})$ in $G^*$. Hence
\begin{equation}
\begin{aligned}
e(\overline{H_1})&\ge e(G^*[\overline{V(H_1)\cup V(H_3)}])+e(H_3)+e(V(H_3),V(\overline{H_1\cup H_3}))
\\&\ge\binom{p_2+1}{2}+\binom{p_3}{2}+p_3
\\&=\binom{p_2+1}{2}+\binom{p_3+1}{2}.
\end{aligned}
\nonumber
\end{equation}
So the claim holds. Therefore we can assume that there is a $w'\in \overline{S_1\cup V(H_3)\cup \{v\}}$ such that $|V(G_{vw',3})\cap V(H_3)|\le p_3-1$ for any choice of $G_{vw',1}\cup G_{vw',2}\cup G_{vw',3}$ in $G^*+vw'$ with $G_{vw',i}\cong K_{p_i}$ and $i\in[3]$.

If $|V(G_{vw',3})\cap V(H_3)|= p_3-1$ for any choice of $G_{vw',1}\cup G_{vw',2}\cup G_{vw',3}$ in $G^*+vw'$, let $V(G_{vw',3})\backslash V(H_3)=\{u_1\}$ and $V(H_3)\backslash V(G_{vw',3})=\{u_2\}$. Then $u_2\in V(G_{vw',2})$, otherwise $G'_{vw'}=G^*[S_1\cup \{v,w'\}]\cup G_{vw',2} \cup H_3$ is also a copy of $K_{p_1}\cup K_{p_2}\cup K_{p_3}$ in $G^*+vw'$, a contradiction to the choice of $w'$. So $u_2\in V(H_3)\cap V(G_{vw',2})$ and since $|V(G_{vw',3})\cap V(H_3)|= p_3-1$, we have that $V(H_3)\cap V(G_{vw',2})=\{u_2\}$.

Subsequently, we first consider $V(G_{vw',2})\backslash \{u_2\} \nsubseteq V(H_2)$.
\begin{equation}
\begin{aligned}
e(\overline{H_1})&\ge e(H_2)+e(H_3)+ e(u_1,V(G_{vw',3})\backslash \{u_1\})+e(u_2,V(G_{vw',2})\backslash \{u_2\})
\\&+|E(G^*[V(G_{vw',2})\backslash \{u_2\}])\backslash E(H_2)|
\\&\ge\binom{p_2}{2}+\binom{p_3}{2}+(p_3-1)+(p_2-1)+(p_2-2)
\\&\ge\binom{p_2+1}{2}+\binom{p_3+1}{2}.
\end{aligned}
\nonumber
\end{equation}
So the claim holds. Next we consider $V(G_{vw',2})\backslash \{u_2\} \subseteq V(H_2)$, that is $|V(G_{vw',2})\cap V(H_2)|=p_2-1$. If there is a vertex $x\in \overline{V(H_3)\cup \{u_1\}}$ with degree at least $p_3-1$ in $G^*[\overline{S_1}]$, then
\begin{equation}
\begin{aligned}
e(\overline{H_1})&\ge e(H_2)+e(H_3)+ e(u_1,V(G_{vw',3})\backslash \{u_1\})+e(u_2,V(G_{vw',2})\backslash \{u_2\})+e(x,\overline{S_1}\backslash V(H_2))
\\&\ge \binom{p_2}{2}+\binom{p_3}{2}+(p_3-1)+(p_2-1)+(p_3-1)-p_2
\\&>\binom{p_2+1}{2}+\binom{p_3+1}{2}.
\end{aligned}
\nonumber
\end{equation}
Then the claim holds. Then we consider that $d_{G^*[\overline{S_1}]}(x)<p_3-1$ for any $x\in \overline{S_1\cup V(H_3)\cup \{u_1\}}$. It implies that $G^*[\{u_1\}\cup (V(H_3)\backslash \{v_1\})]$ is the only possible subgraph which is isomorphic to $K_{p_3}$ in $G^*+vv_1$ for $v_1\in V(H_3)\backslash \{u_2\}$. Recall that $u_2\in V(H_3)$, then $u_1u_2\in E(G^*)$.

So far, since $V(H_3)\cap V(G_{vw',2})=\{u_2\}$, we have that $e(G^*[\overline{S_1}])-e(H_2\cup H_3)\ge e(\{u_1\},V(H_3))+e(\{u_2\},V(G_{vw',2})\backslash \{u_2\})=p_3+p_2-1$. If $e(G^*[\overline{S_1}])-e(H_2\cup H_3)=p_3+p_2-1$, then $E(G^*[\overline{S_1}])= E(H_2\cup H_3) \cup E(\{u_1\},V(H_3)) \cup E(\{u_2\},V(G_{vw',2})\backslash \{u_2\})$. Let $v'$ be a vertex in $V(H_2)\cap V(G_{vw',2})$, then $K_{p_1}\cup K_{p_2}\cup K_{p_3}$ is not a subgraph of $G^*+vv'$, a contradiction. Therefore $e(G^*[\overline{S_1}])-e(H_2\cup H_3)\ge p_3+p_2$.

If $|V(G_{vw',3})\cap V(H_3)|\le p_3-2$, since $H_2\cong K_{p_2}$, $|V(G_{vw',3})\cap V(H_2)|\le p_2$. In fact, $|V(G_{vw',3})\backslash V(H_2\cup H_3)|\le 1$, otherwise there are two vertices $w_1,w_2\in V(G_{vw',3})\backslash V(H_2\cup H_3)$, then $G^*[S_1\cup \{w_1,w_2\}]\cup H_2\cup H_3\cong K_{p_1}\cup K_{p_2}\cup K_{p_3}$ in $G^*$, a contradiction. So $2\le |V(G_{vw',3})\backslash V(H_3)|\le p_2+1$. As $|V(G_{vw',3})\cap V(H_3)|+|V(G_{vw',3})\backslash V(H_3)|=|V(G_{vw',3})|=p_3$, we have that $|V(G_{vw',3})\cap V(H_3)|\times|V(G_{vw',3})\backslash V(H_3)|\ge 2(p_3-2)$, then
\begin{equation}
\begin{aligned}
e(\overline{H_1})&\ge e(H_2)+e(H_3)+|V(G_{vw',3})\cap V(H_3)|\times|V(G_{vw',3})\backslash V(H_3)|
\\&\ge\binom{p_2}{2}+\binom{p_3}{2}+2(p_3-2)
\\&\ge\binom{p_2+1}{2}+\binom{p_3+1}{2}.
\end{aligned}
\nonumber
\end{equation}
So the claim holds.
\end{proof}

Next, we proceed to prove Theorem~\ref{th1}.

If for any $u\in \overline{S\cup V(H_{vw,3})\cup \{v,w\}}$, there exists a subgraph $H=H_{vu,1}\cup H_{vu,2}\cup H_{vu,3}$ in $G[\overline{S\cup\{v,u\}}]$ with $H_{vu,i}\cong K_{p_{i+1}}$ for $i\in[2]$ and $H_{vu,3}=H_{vw,3}$, we will consider the graph $G_1=G[V(G)-V(H_{vw,3})]$, and note that $G[S]\lor (H_{vw,1}\cup H_{vw,2}\cup G[\overline{S\cup V(H_{vw})}]) \cong K_{p_1-2}\lor (K_{p_2}\cup K_{p_3}\cup I_{n+2-\sum_{i=1}^4 p_i})$ is a subgraph of $G_1$, but $K_{p_1}\cup K_{p_2}\cup K_{p_3}$ is not, and there is a subgraph $G'[S\cup \{v,u\}] \cup H_{vu,1}\cup H_{vu,2}\cong K_{p_1}\cup K_{p_2}\cup K_{p_3}$ in $G'=G_1+vu$ with $u\in V(G_1)\backslash (S\cup \{v,w\})$. By Claim~\ref{lem5}, we know that $e(G_1[V(G_1)\backslash S])\ge \binom{p_2+1}{2}+\binom{p_3+1}{2}$. It implies that $e(G[\overline{S\cup V(H_{vw,3})}])\ge \binom{p_2+1}{2}+\binom{p_3+1}{2}$. Also by Lemma~\ref{lem3}, we know that $N_{G}(u')\cap \overline{S\cup V(H_{vw,3})}\neq \emptyset$ for any $u'\in V(H_{vw,3})$, so there are at least $|V(H_{vw,3})|=p_4$ edges between $V(H_{vw,3})$ and $\overline{S\cup V(H_{vw,3})}$ in $G$. Hence
\begin{equation}
\begin{aligned}
e(G[\overline{S}])&\ge e(G_1[V(G_1)\backslash S])+e(H_{vw,3})+e(V(H_{vw,3}),\overline{S\cup V(H_{vw,3})})
\\&=\binom{p_2+1}{2}+\binom{p_3+1}{2}+\binom{p_4}{2}+p_4
\\&=\sum_{i=2}^{4}\binom{p_i+1}{2},
\end{aligned}
\nonumber
\end{equation}
a contradiction to Equation (\ref{eq1}).

Assume that there is a vertex $u\in \overline{S\cup V(H_{vw,3})\cup\{v,w\}}$ such that $|V(H_{vu,3})\cap V(H_{vw,3})|\le p_4-1$ for any choice of $H_{vu,1}\cup H_{vu,2}\cup H_{vu,3}$ in $G+vu$ with $H_{vw,i}\cong K_{p_{i+1}}$ and $i\in[3]$
\vspace{4mm}

\noindent \textbf{Case 1.} $|V(H_{vu,3})\cap V(H_{vw,3})|\le p_4-3$.

Since $|V(H_{vw,j})|=p_{j+1}<p_4-3$ for $j\in [2]$, one has that $\max \limits_{1\le i\le 3}|V(H_{vu,3})\cap V(H_{vw,i})|\le p_4-3$. Note that $|V(H_{vu,3})|=p_4\ge 9$, so we have that $\max \limits_{1\le i\le 3}|V(H_{vu,3})\cap V(H_{vw,i})|\ge 3$. Let $a=\max \limits_{1\le i\le 3}|V(H_{vu,3})\cap V(H_{vw,i})|$. Then $3\le a\le p_4-3$ and $p_4-a\ge 3$. It implies that $a(p_4-a)\ge 3(p_4-3)$.Hence
\begin{equation}
\begin{aligned}
e(G[\overline{S}])&\ge e(H_{vw})+a(p_4-a)
\\&\ge\sum_{i=2}^{4}\binom{p_i}{2}+3(p_4-3)>\sum_{i=2}^{4}\binom{p_i+1}{2},
\end{aligned}
\nonumber
\end{equation}
a contradiction to Equation (\ref{eq1}).
\vspace{4mm}

\noindent \textbf{Case 2.} $|V(H_{vu,3})\cap V(H_{vw,3})| = p_4-2$.

There are two vertices in $V(H_{vw,3})\backslash V(H_{vu,3})$, namely $v_1$ and $v_2$. If $v_1,v_2\notin V(H_{vu})$, then $G[S,\{v_1,v_2\}]\cup H_{vu,1}\cup H_{vu,2}\cup H_{vu,3}\cong K_{p_1}\cup K_{p_2} \cup K_{p_3}\cup K_{p_4}$ in $G$, a contradiction. It implies that there is at least one vertex of $\{v_1,v_2\}$ in $V(H_{vu,i})$ for $i\in [2]$. If $v_1,v_2 \in V(H_{vu,1})\cup V(H_{vu,2})$, then
\begin{equation}
\begin{aligned}
e(G[\overline{S}])&\ge e(H_{vw})+e(\{v_1,v_2\},(V(H_{vu,1})\cup V(H_{vu,2}))\backslash\{v_1,v_2\})
\\&+|V(H_{vu,3})\backslash V(H_{vw,3})|\times|V(H_{vu,3})\cap V(H_{vw,3})|
\\&\ge\sum_{i=2}^{4}\binom{p_i}{2}+2(p_2-2)+2(p_4-2)>\sum_{i=2}^{4}\binom{p_i+1}{2},
\end{aligned}
\nonumber
\end{equation}
a contradiction to Equation (\ref{eq1}). So there is only one vertex of $\{v_1,v_2\}$ in $V(H_{vu,1})\cup V(H_{vu,2})$, say $v_1\in V(H_{vu,1})\cup V(H_{vu,2})$. By Lemma~\ref{lem3} and $v_2\in V(H_{vw,3})$, we know that $v_2$ has a neighbor in $\overline{S}\backslash V(H_{vw,3})$. Hence
\begin{equation}
\begin{aligned}
e(G[\overline{S}])&\ge e(H_{vw})+|V(H_{vu,3})\backslash V(H_{vw,3})|\times|V(H_{vu,3})\cap V(H_{vw,3})|
\\&+e(\{v_1\},(V(H_{vu,1})\cup V(H_{vu,2}))\backslash\{v_1\})+e(\{v_2\},\overline{S}\backslash V(H_{vw,3}))
\\&\ge\sum_{i=2}^{4}\binom{p_i}{2}+2(p_4-2)+(p_2-1)+1\ge\sum_{i=2}^{4}\binom{p_i+1}{2},
\end{aligned}
\nonumber
\end{equation}
a contradiction to Equation (\ref{eq1}).
\vspace{4mm}

\noindent \textbf{Case 3.} $|V(H_{vu,3})\cap V(H_{vw,3})| = p_4-1$.

Since $H_{vu,3}\cong K_{p_4}$ and $|V(H_{vu,3})\cap V(H_{vw,3})| = p_4-1$, one has that there is a vertex in $V(H_{vu,3})\backslash V(H_{vw,3})$, namely $v_1$. Let $V(H_{vw,3})\backslash V(H_{vu,3})=\{v_2\}$, then $V(H_{vw,3})\cap V(H_{vu,i})=\{v_2\}$ for some $i\in [2]$, otherwise $H'_{vu,1}\cup H'_{vu,2}\cup H'_{vu,3}=H_{vu,1}\cup H_{vu,2}\cup G[V(H_{vu,3})\backslash \{v_1\})\cup \{v_2\}]$ is also a subgraph of $G[\overline{S\cup \{v,u\}}]$ with $H'_{vu,3}=H_{vw,3}$, a contradiction to the choice of $u$.
\vspace{4mm}

\noindent \textbf{Subcase 3.1.} $i=1$.

For simplicity, let $V(H_{vu,1})\cap V(H_{vw,j})=L_{1j}$ and $|V(H_{vu,1})\cap V(H_{vw,j})|={\ell}_{1j}$ for $j\in[3]$ (see Figure~\ref{fig2}). Note that ${\ell}_{13}=1$ and ${\ell}_{11}+{\ell}_{12}+{\ell}_{13}\le p_2$, so ${\ell}_{11}+{\ell}_{12}\le p_2-1$. If ${\ell}_{11}+{\ell}_{12}< p_2-2$, then there are two adjacent vertices $y$ and $z$ in $G[V(H_{vu,1}) \backslash V(H_{vw})]$. Hence, $G[S \cup \{y, z\}] \cup H_{vw}$ is a subgraph isomorphic to $K_{p_1}\cup \dots \cup K_{p_4}$ in $G$, a contradiction. So ${\ell}_{11}+{\ell}_{12}\ge p_2-2$.
 \begin{figure}[H]
	\centering
	\includegraphics[width=0.4\linewidth]{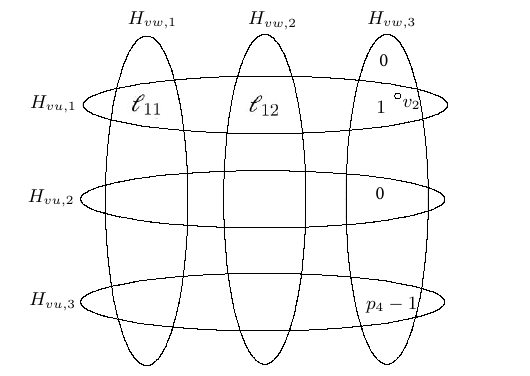} 
	\caption{The illustration of Subcase 3.1 with $H_{vu,i}\cong K_{p_{i+1}}\cong H_{vw,i}$ for $i\in[3]$.}
	\label{fig2}
\end{figure}
\vspace{4mm}

\noindent \textbf{Subcase 3.1.1.} ${\ell}_{12}\ge 1$.

Recall that $u\in \overline{S\cup V(H_{vw,3})\cup\{v,w\}}$, we will prove that $u\in V(H_{vw,1})\cup V(H_{vw,2})$. Otherwise $u\in V(\overline{S\cup H_{vw}\cup \{v,w\}})$. Thus $G'[S\cup \{u,v\}]\cup H_{vw}$ is a subgraph of $G'=G+vu$ which is isomorphic to $K_{p_1}\cup K_{p_2} \cup K_{p_3}\cup K_{p_4}$ satisfying that $H_{vu,3}=H_{vw,3}$, a contradiction to the choice of $u$.

First, we consider $u\in V(H_{vw,1})$. It can be proved that ${\ell}_{11}\ge 1$, otherwise we have ${\ell}_{12}\ge p_2-2$. And since ${\ell}_{12}\le p_2-1$ and $H_{vw,2}\cong K_{p_3}$, one has that $|V(H_{vw,2})\backslash L_{12}|\ge p_3-p_2+1$. Therefore
\begin{equation}
\begin{aligned}
e(G[\overline{S}])&\ge e(H_{vu})+e(\{v_2\},V(H_{vw,3})\backslash\{v_2\})+e(\{u\},V(H_{vw,1})\backslash\{u\})+{\ell}_{12}|V(H_{vw,2})\backslash L_{12}|
\\&\ge \sum_{i=2}^{4}\binom{p_i}{2}+(p_4-1)+(p_2-1)+(p_2-2)(p_3-p_2+1)\ge \sum_{i=2}^{4}\binom{p_i+1}{2},
\end{aligned}
\nonumber
\end{equation}
a contradiction to Equation (\ref{eq1}). If $V(H_{vw,2})\backslash V(H_{vu})\neq \emptyset$, there is a vertex $x_1\in V(H_{vw,2})\backslash V(H_{vu})$, since ${\ell}_{11}\ge 1$ and ${\ell}_{12}\ge 1$, we have that
\begin{equation}
\begin{aligned}
e(G[\overline{S}])-e(H_{vu})&\ge e(\{v_2\},V(H_{vw,3})\backslash\{v_2\})+e(\{u\},V(H_{vw,1})\backslash\{u\})+{\ell}_{11}|V(H_{vw,1})\backslash (\{u\}\cup L_{11})|
\\&+e(\{x_1\},V(H_{vw,2})\backslash\{x_1\})+{\ell}_{12}|V(H_{vw,2})\backslash (\{x_1\}\cup L_{12})|
\\&\ge (p_4-1)+(p_2-1)+(p_2-2)+(p_3-1)+(p_3-2)\ge p_2+p_3+p_4,
\end{aligned}
\nonumber
\end{equation}
a contradiction to Equation (\ref{eq1}). So $V(H_{vw,2})\backslash V(H_{vu})=\emptyset$. By Lemma~\ref{lem6} and $|V(H_{vu})\cup \{u\}|=p_2+p_3+p_4+1$, there is a vertex $x_2\in \overline{S\cup V(H_{vu})\cup \{v,u\}}$ with $d_{G[\overline{S}]}(x_2)\ge p_2-1$. It is true that $V(H_{vw,1})\backslash (\{u\}\cup V(H_{vu}))=\emptyset$, otherwise there is a vertex $u'\in V(H_{vw,1})\backslash (\{u\}\cup V(H_{vu}))$ and $G[S\cup \{u,u'\}]\cup H_{vu}\cong K_{p_1}\cup K_{p_2} \cup K_{p_3}\cup K_{p_4}$ in $G$, a contradiction. Also recall that ${\ell}_{13}=1$ and $|V(H_{vu,3})\cap V(H_{vw,3})|=p_4-1$, since $H_{vw,3}\cong K_{p_4}$, one has that $V(H_{vw,3})\backslash V(H_{vu})=\emptyset$. From the above, we know that $x_2\notin S\cup V(H_{vw})\cup V(H_{vu})\cup \{v,u\}$. Recall that ${\ell}_{11}\ge 1$ and ${\ell}_{12}\ge 1$. Thus
\begin{equation}
\begin{aligned}
e(G[\overline{S}])&\ge e(H_{vu})+e(\{v_2\},V(H_{vw,3})\backslash\{v_2\})+e(\{u\},V(H_{vw,1})\backslash\{u\})+{\ell}_{11}|V(H_{vw,1})\backslash (\{u\}\cup L_{11})|
\\&+{\ell}_{12}|V(H_{vw,2})\backslash L_{12}|+d_{G[\overline{S}]}(x_2)
\\&\ge\sum_{i=2}^{4}\binom{p_i}{2}+(p_4-1)+(p_2-1)+(p_2-2)+(p_3-1)+(p_2-1)>\sum_{i=2}^{4}\binom{p_i+1}{2},
\end{aligned}
\nonumber
\end{equation}
a contradiction to Equation (\ref{eq1}).

Next we consider $u\in V(H_{vw,2})$. It can be proved that ${\ell}_{11}\ge 1$, otherwise we have ${\ell}_{12}\ge p_2-2$. And since ${\ell}_{12}\le p_2-1$ and $H_{vw,2}\cong K_{p_3}$, one has that $|V(H_{vw,2})\backslash (\{u\}\cup L_{12})|\ge p_3-p_2$. Therefore
\begin{equation}
\begin{aligned}
e(G[\overline{S}])&\ge e(H_{vu})+e(\{v_2\},V(H_{vw,3})\backslash\{v_2\})+e(\{u\},V(H_{vw,2})\backslash\{u\})+{\ell}_{12}|V(H_{vw,2})\backslash (\{u\}\cup L_{12})|
\\&\ge\sum_{i=2}^{4}\binom{p_i}{2}+(p_4-1)+(p_3-1)+(p_2-2)(p_3-p_2)>\sum_{i=2}^{4}\binom{p_i+1}{2},
\end{aligned}
\nonumber
\end{equation}
a contradiction to Equation (\ref{eq1}). Note that ${\ell}_{11}\ge 1$ and ${\ell}_{12}\ge 1$, then
\begin{equation}
\begin{aligned}
e(G[\overline{S}])&\ge e(H_{vu})+e(\{v_2\},V(H_{vw,3})\backslash\{v_2\})+e(\{u\},V(H_{vw,2})\backslash\{u\})+{\ell}_{12}|V(H_{vw,2})\backslash (\{u\}\cup L_{12})|
\\&+{\ell}_{11}|V(H_{vw,1})\backslash L_{11}|
\\&\ge\sum_{i=2}^{4}\binom{p_i}{2}+(p_4-1)+(p_3-1)+(p_3-2)+(p_2-1)>\sum_{i=2}^{4}\binom{p_i+1}{2},
\end{aligned}
\nonumber
\end{equation}
a contradiction to Equation (\ref{eq1}).
\vspace{4mm}

\noindent \textbf{Subcase 3.1.2.} ${\ell}_{12}=0$.

Recall that $p_2-2\le {\ell}_{11}+{\ell}_{12}\le p_2-1$. Then $p_2-2\le {\ell}_{11}\le p_2-1$.

First we consider ${\ell}_{11}=p_2-1$. Recall that $v_1\in V(H_{vu,3})\backslash V(H_{vw,3})$.

If $v_1\in V(H_{vw,1})$, then $G[(V(H_{vu,1})\backslash \{v_2\})\cup \{v_1\}]\cup H_{vu,2}\cup G[V(H_{vu,3})\backslash \{v_1\})\cup \{v_2\}]=H_{vw,1}\cup H_{vu,2}\cup H_{vw,3}$ is also a copy of $K_{p_2}\cup K_{p_3}\cup K_{p_4}$ in $G[\overline{S\cup \{v,u\}}]$, a contradiction to the arbitrariness of $u$'s choice (see Figure~\ref{fig3}).
\begin{figure}[H]
	\centering
	\includegraphics[width=0.4\linewidth]{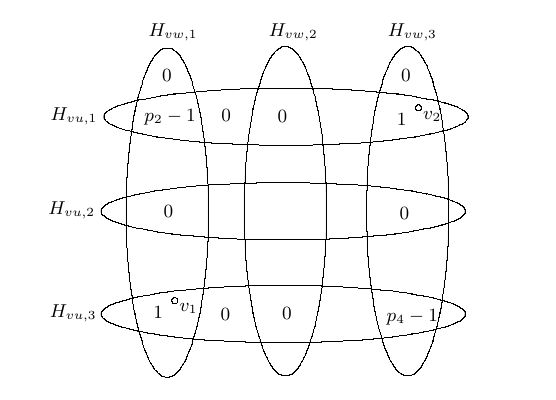} 
	\caption{The illustration of Subcase 3.1.2 with ${\ell}_{11}=p_2-1$, $v_1\in V(H_{vw,1})$ and $H_{vu,i}\cong K_{p_{i+1}}\cong H_{vw,i}$ for $i\in[3]$.}
	\label{fig3}
\end{figure}

If $v_1\in V(H_{vw,2})$, there is at most one vertex in $V(H_{vw,2}) \backslash V(H_{vu})$, otherwise assume that $u_1,u_2\in V(H_{vw,2}) \backslash V(H_{vu})$, then $G[S\cup \{u_1,u_2\}]\cup H_{vu,1}\cup H_{vu,2}\cup H_{vu,3}\cong K_{p_1}\cup K_{p_2} \cup K_{p_3}\cup K_{p_4}$ in $G$, a contradiction. It implies that $p_3-2 \le |V(H_{vw,2})\cap V(H_{vu,2})|\le p_3-1$ and therefore $|V(H_{vw,2})\cap V(H_{vu,2})|\times|V(H_{vu,2})\backslash V(H_{vw,2})|\ge \min \{p_3-1,2(p_3-2)\}=p_3-1$. Since there is at most one vertex $y$ of $\overline{S\cup V(H_{vw})}$ in $V(H_{vu,2})\backslash V(H_{vw})$ and $(V(H_{vu,1})\cup V(H_{vu,3}))\backslash V(H_{vw})=\emptyset$, by Lemma~\ref{lem6}, we know that there is a vertex $z$ in $\overline{S\cup V(H_{vw})\cup \{v,w,y\}}$ which has at least $p_2-1$ neighbors in $G[\overline{S}]$. Hence
\begin{equation}
\begin{aligned}
e(G[\overline{S}])&\ge e(H_{vw})+e(\{v_2\},V(H_{vu,1})\cap V(H_{vw,1}))+|V(H_{vw,2})\cap V(H_{vu,2})|\times|V(H_{vu,2})\backslash V(H_{vw,2})|
\\&+e(\{v_1\},V(H_{vu,3})\cap V(H_{vw,3}))+e(\{z\},\overline{S})
\\&\ge\sum_{i=2}^{4}\binom{p_i}{2}+(p_2-1)+(p_3-1)+(p_4-1)+(p_2-1)>\sum_{i=2}^{4}\binom{p_i+1}{2},
\end{aligned}
\nonumber
\end{equation}
a contradiction to Equation (\ref{eq1}).

If $v_1\in V(H_{vu,3})\backslash V(H_{vw})$, by Lemma~\ref{lem3}, we know that $N_{G}(u')\cap (\overline{S\cup V(H_{vw,2})})\neq \emptyset$ for any $u'\in V(H_{vw,2})$, so there is a set $L$ of edges between $V(H_{vw,2})$ and $\overline{S\cup V(H_{vw,2})}$ in $G$ with $|L|\ge |V(H_{vw,2})|=p_3$. Therefore we have that $e(G[\overline{S}])-e(H_{vw})\ge e(\{v_2\},L_{11})+|L|+e(\{v_1\},V(H_{vw,3})\cap V(H_{vu,3}))\ge p_2-1+p_3+p_4-1=p_2+p_3+p_4-2$. If $e(G[\overline{S}])-e(H_{vw})\le p_2+p_3+p_4-1$, we consider the graph $G+vx_1$ with $x_1\in V(H_{vw,3})\cap V(H_{vu,3})$ and there is a subgraph $H'$ which is isomorphic to $K_{p_4}$ in $G[\overline{S\cup \{v,x_1\}}]$. Since only the vertices in $V(H_{vw,3})\cup \{v_1\}$ can be in $V(H')$, $G[\{v_1\}\cup (V(H_{vw,3})\backslash \{x_1\})]$ is the only possible $H'$. It implies that $v_1v_2\in E(G[\overline{S}])$. Note that $v_1v_2\notin E(\{v_2\},L_{11})\cup L \cup E(\{v_1\},V(H_{vw,3})\cap V(H_{vu,3}))$, so $e(G[\overline{S}])-e(H_{vw})\ge p_2+p_3+p_4-1$. Therefore $e(G[\overline{S}])-e(H_{vw})= p_2+p_3+p_4-1$ and $E(G[\overline{S}])=E(H_{vw})\cup E(\{v_2\},L_{11})\cup L\cup E(\{v_1\},V(H_{vw,3})\cap V(H_{vu,3}))\cup \{v_1v_2\}$, so there is no subgraph in $G+vx_2$ which is isomorphic to $K_{p_1}\cup K_{p_2}\cup K_{p_3}\cup K_{p_4}$ for $x_2\in V(H_{vw,1})\cap V(H_{vu,1})$, a contradiction. Hence $e(G[\overline{S}])-e(H_{vw})\ge p_2+p_3+p_4$, a contradiction to Equation (\ref{eq1}).

Next we consider ${\ell}_{11}=p_2-2$. Note that ${\ell}_{12}=0$ and ${\ell}_{13}=1$, then there is a vertex $y'$ in $V(H_{vu,1})\backslash V(H_{vw})$. By Lemma~\ref{lem3}, we know that $N_{G}(u')\cap (\overline{S\cup V(H_{vw,2})})\neq \emptyset$ for any $u'\in V(H_{vw,2})$, so there are at least $|V(H_{vw,2})|=p_3$ edges between $V(H_{vw,2})$ and $\overline{S\cup V(H_{vw,2})}$ in $G$. It is possible that $v_1\in V(H_{vw,2})$, so there is a set $L_1$ of edges between $V(H_{vw,2})\backslash \{v_1\}$ and $\overline{S\cup V(H_{vw,2})}$ in $G$ with $|L_1|\ge p_3-1$. Therefore we have that $e(G[\overline{S}])-e(H_{vw})\ge e(\{v_2\},V(H_{vu,1})\backslash\{v_2\})+e(\{y'\},V(H_{vw,1})\cap V(H_{vu,1}))+|L_1|+e(\{v_1\},V(H_{vw,3})\backslash\{v_2\})=p_2-1+p_2-2+p_3-1+p_4-1\ge p_2+p_3+p_4-1$. If $e(G[\overline{S}])-e(H_{vw})=p_2+p_3+p_4-1$, then $E(G[\overline{S}])=E(H_{vw})\cup E(\{v_2\},V(H_{vu,1})\backslash\{v_2\})\cup E(\{y'\},V(H_{vw,1})\cap V(H_{vu,1})) \cup L_1 \cup E(\{v_1\},V(H_{vw,3})\backslash\{v_2\})$ with $|L_1| = p_3-1$. We will consider the graph $G+vy_1$ with $y_1\in V(H_{vw,3})\cap V(H_{vu,3})$ and there is a subgraph $H'$ which is isomorphic to $K_{p_4}$ in $G[\overline{S\cup \{v,y_1\}}]$. Since only the vertices in $V(H_{vw,3})\cup \{v_1\}$ can be in $V(H')$, $G[\{v_1\}\cup (V(H_{vw,3})\backslash \{y_1\})]$ is the only possible $H'$. It implies that $v_1v_2\in E(G[\overline{S}])$. So $e(G[\overline{S}])-e(H_{vw})\ge p_2+p_3+p_4$, a contradiction to Equation (\ref{eq1}).
\vspace{4mm}

\noindent \textbf{Subcase 3.2.} $i=2$.

First we consider that $V(H_{vu,2})\backslash\{v_2\} \subseteq V(H_{vw,2})$, and it implies that $|V(H_{vw,2})\cap V(H_{vu,2})|=p_3-1$. Recall that $v_1\in V(H_{vu,3})\backslash V(H_{vw,3})$. If $v_1\in V(H_{vw,1})\cup V(H_{vw,2})$, by Lemma~\ref{lem6}, we know that there are two vertices $y$ and $z$ in $\overline{S\cup V(H_{vw})}$ with $d_{G[\overline{S}]}(y)\ge p_2-1$ and $d_{G[\overline{S}]}(z)\ge p_2-1$. Hence
\begin{equation}
\begin{aligned}
e(G[\overline{S}])&\ge e(H_{vw})+e(\{v_2\},V(H_{vu,2})\cap V(H_{vw,2}))+e(\{v_1\},V(H_{vu,3})\cap V(H_{vw,3}))
\\&+e(\{y\},\overline{S})+e(\{z\},\overline{S})
\\&\ge\sum_{i=2}^{4}\binom{p_i}{2}+(p_3-1)+(p_4-1)+2(p_2-1)>\sum_{i=2}^{4}\binom{p_i+1}{2},
\end{aligned}
\nonumber
\end{equation}
a contradiction to Equation (\ref{eq1}).

If $v_1\in V(H_{vu,3})\backslash V(H_{vw})$ (see Figure~\ref{fig4}), by Lemma~\ref{lem3}, we know that $N_{G}(u')\cap (\overline{S\cup V(H_{vw,1})})\neq \emptyset$ for any $u'\in V(H_{vw,1})$, so there is a set $L$ of edges between $V(H_{vw,1})$ and $\overline{S\cup V(H_{vw,1})}$ in $G$ with $|L|\ge |V(H_{vw,1})|=p_2$. Therefore we have that $e(G[\overline{S}])-e(H_{vw})\ge |L|+e(\{v_2\},V(H_{vu,2})\cap V(H_{vw,2}))+e(\{v_1\},V(H_{vw,3})\cap V(H_{vu,3}))=p_2+p_3-1+p_4-1=p_2+p_3+p_4-2$. If $e(G[\overline{S}])-e(H_{vw})\le p_2+p_3+p_4-1$, we will consider the graph $G+vx_1$ with $x_1\in V(H_{vw,3})\cap V(H_{vu,3})$ and there is a subgraph $H'$ which is isomorphic to $K_{p_4}$ in $G[\overline{S\cup \{v,x_1\}}]$. Since only the vertices in $V(H_{vw,3})\cup \{v_1\}$ can be in $V(H')$, $G[\{v_1\}\cup (V(H_{vw,3})\backslash \{x_1\})]$ is the only possible $H'$. It implies that $v_1v_2\in E(G[\overline{S}])$. Note that $v_1v_2\notin L \cup  E(\{v_2\},V(H_{vu,2})\cap V(H_{vw,2}))\cup E(\{v_1\},V(H_{vw,3})\cap V(H_{vu,3}))$, so $e(G[\overline{S}])-e(H_{vw})\ge p_2+p_3+p_4-1$. Therefore $e(G[\overline{S}])-e(H_{vw})= p_2+p_3+p_4-1$ and $E(G[\overline{S}])=E(H_{vw})\cup L \cup  E(\{v_2\},V(H_{vu,2})\cap V(H_{vw,2}))\cup E(\{v_1\},V(H_{vu,3})\cap V(H_{vw,3}))\cup \{v_1v_2\}$ with $|L|=p_2$, so there is no subgraph in $G+vx_2$ which is isomorphic to $K_{p_1}\cup K_{p_2}\cup K_{p_3}\cup K_{p_4}$ for $x_2\in V(H_{vw,2})\cap V(H_{vu,2})$, a contradiction. Hence $e(G[\overline{S}])-e(H_{vw})\ge p_2+p_3+p_4$, a contradiction to Equation (\ref{eq1}).
\begin{figure}[H]
	\centering
	\includegraphics[width=0.4\linewidth]{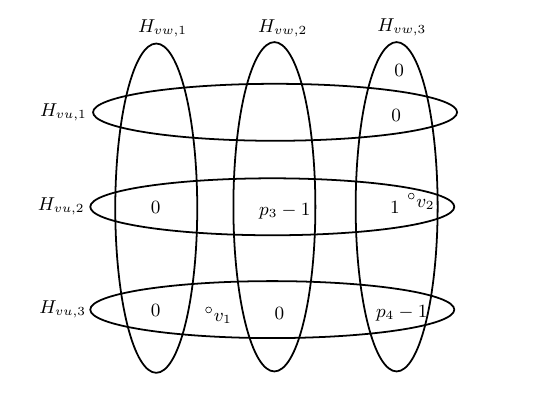} 
	\caption{The illustration of Subcase 3.2 with $V(H_{vu,2})\backslash\{v_2\} \subseteq V(H_{vw,2})$, $v_1\in V(H_{vu,3})\backslash V(H_{vw})$ and $H_{vu,i}\cong K_{p_{i+1}}\cong H_{vw,i}$ for $i\in[3]$.}
	\label{fig4}
\end{figure}

Next we consider that $V(H_{vu,2})\backslash\{v_2\} \nsubseteq V(H_{vw,2})$. It implies that $|V(H_{vu,2})\backslash V(H_{vw})|\ge 1$. Since $|V(H_{vu,2})\backslash\{v_2\}|=p_3-1$, one has that $|E(G[V(H_{vu,2})\backslash\{v_2\}])\backslash E(H_{vw})|\ge p_3-2$. Therefore,
\begin{equation}
\begin{aligned}
e(G[\overline{S}])&\ge e(H_{vw})+e(\{v_2\},V(H_{vu,2})\cap V(H_{vw,2}))+e(\{v_1\},V(H_{vu,3})\cap V(H_{vw,3}))
\\&+|E(G[V(H_{vu,2})\backslash\{v_2\}])\backslash E(H_{vw})|
\\&\ge\sum_{i=2}^{4}\binom{p_i}{2}+(p_3-1)+(p_4-1)+(p_3-2)\ge\sum_{i=2}^{4}\binom{p_i+1}{2},
\end{aligned}
\nonumber
\end{equation}
a contradiction to Equation (\ref{eq1}).
\qed
\section{Remark}\label{sec 5}
 In Section~\ref{sec 3}, we have proved that $H(n; p_1, p_2, \dots, p_t)$ is $K_{p_1} \cup \dots \cup K_{p_t}$-saturated if and only if $p_{i+1}-p_i \ge p_1$ or $p_{i+1}=p_i$ for $2\le i\le t-1$. Let the set of all $n$-vertex $H$-saturated graph with $sat(n, H)$ edges is denoted by $Sat(n, H)$. We finish our discussion with a question related to $H(n; p_1, p_2, \dots, p_t)$.

\begin{problem}
When $H(n; p_1, p_2, \dots, p_t)$ is $K_{p_1} \cup \dots \cup K_{p_t}$-saturated, does the result that \\$H(n; p_1, p_2, \dots, p_t) \in Sat(n,K_{p_1} \cup K_{p_2} \cup \dots \cup K_{p_t})$ holds?
\end{problem}
\section*{Acknowledgments}
This paper is supported by the National Natural Science Foundation of China (No.~12331013 and No.~12161141005); and by Beijing Natural Science Foundation (No.~1244047), China Postdoctoral Science Foundation (No.~2023M740207).


\begin{thebibliography}{99}
        \bibitem{ref8} T. Bohman, M. Fonoberova, O. Pikhurko, The saturation function of complete partite graphs, J.Comb., 1 (2010) 149-170.
        \bibitem{ref14} J.A. Bondy, U.S.R. Murty, Graph Theory, Springer, New York, 2007.
        \bibitem{ref13} F. Chen, X. Yuan, Some results on the saturation number for unions of cliques, Discrete Math., 347 (2024), 113868.
        \bibitem{ref3} Y.-C. Chen, Minimum $C_5$-saturated graphs, J. Graph Theory, 61 (2009) 111-126.
        \bibitem{ref5} Y.-C. Chen, Minimum $K_{2,3}$-saturated graphs, J. Graph Theory, 76 (2014) 309-322.
        \bibitem{ref6} B. Currie, J. Faudree, R. Faudree, J. Schmitt, A survey of minimum saturated graphs, Electron. J. Comb., 18 (2011), Dynamic Survey 19, 98 pages.
        \bibitem{ref1} P. Erd\H{o}s, A. Hajnal, J.W. Moon, A problem in graph theory, Amer. Math. Monthly, 71 (1964) 1107-1110.
        \bibitem{ref12} R. Faudree, M. Ferrara, R. Gould, M. Jacobson, $tK_p$ -saturated graphs of minimum size, Discrete Math., 309 (2009) 5870-5876.
        \bibitem{ref4} Z. F\"uredi, Y. Kim, Cycle-saturated graphs with minimum number of edges. Journal of Graph Theory, 2013, 73 (2): 203-215.
        \bibitem{ref7} S. Huang, H. Lei, Y. Shi, J. Zhang, The saturation number of $K_{3,3}$, Discrete Math., 347 (2024) 113794.
        \bibitem{ref2} L. K¨¢sonyi, Z. Tuza, Saturated graphs with minimal number of edges, J. Graph Theory, 10 (1986) 203-210.
        \bibitem{ref9} Y. Lan, Y. Shi, Y. Wang, J. Zhang, The saturation number of $C_6$, https://arxiv.org/abs/2108.03910.
        \bibitem{ref10} L.T. Ollmann, $K_{2,2}$-saturated graphs with a minimal number of edges, in: Proc. 3rd Southeastern Conference on Combinatorics, Graph Theory and Computing, 1972, pp. 367-392.
        \bibitem{ref11} O. Pikhurko, J. Schmitt, A note on minimum $K_{2,3}$-saturated graphs, Australas. J. Comb., 40 (2008) 211-215.
    \end{thebibliography}
\end{document}